\newcommand{\eps}{\varepsilon}
\newcommand{\epsb}{\varepsilon_{\mathrm{b}}}
\newcommand{\mub}{\mu_{\mathrm{b}}}
\newcommand{\set}[1]{\left\{#1\right\}}
\newcommand{\p}{\partial}
\newcommand{\qand}{\quad\text{and}\quad}
\newcommand{\mrp}{\mathbf{p}}
\newcommand{\mq}{\mathbf{q}}
\newcommand{\mr}{\mathbf{r}}
\newcommand{\mB}{\mathbf{B}}
\newcommand{\mG}{\mathbb{G}}
\newcommand{\mM}{\mathbb{M}}
\newcommand{\mP}{\mathbb{P}}
\newcommand{\mQ}{\mathbb{Q}}
\newcommand{\mS}{\mathbb{S}}
\newcommand{\rd}{\mathrm{d}}
\newcommand{\vt}{\boldsymbol{\theta}}
\newcommand{\vv}{\boldsymbol{\vartheta}}
\DeclareMathOperator*{\dsm}{DSM}
\DeclareMathOperator*{\msm}{MSM}
\DeclareMathOperator*{\scat}{scat}
\DeclareMathOperator*{\area}{area}
\DeclareMathOperator*{\sinc}{sinc}
\theoremstyle{plain}
\newtheorem{Theorem}{Theorem}[section]
\theoremstyle{remark}
\newtheorem{Example}{Example}[section]
\newtheorem{Discussion}{Discussion}[section]
\begin{document}
\title{Direct Sampling Method to Retrieve Small Objects From Two-dimensional Limited-Aperture Scattered Field Data}
\author{Won-Kwang Park}
\affil{Department of Information Security, Cryptology, and Mathematics, Kookmin University, Kookmin University, Seoul, 02707, Korea. Electronic address: parkwk@kookmin.ac.kr}
\date{}
\maketitle
\begin{abstract}
In this study, we investigated the application of the direct sampling method (DSM) to identify small dielectric objects in a limited-aperture inverse scattering problem. Unlike previous studies, we consider the bistatic measurement configuration corresponding to the transmitter location and design indicator functions for both a single source and multiple sources, and we convert the unknown measurement data to a fixed nonzero constant. To explain the applicability and limitation of object detection, we demonstrate that the indicator functions can be expressed by an infinite series of Bessel functions, the material properties of the objects, the bistatic angle, and the converted constant. Based on the theoretical results, we explain how the imaging performance of the DSM is influenced by the bistatic angle and the converted constant. In addition, the results of our analyses demonstrate that a smaller bistatic angle enhances the imaging accuracy and that optimal selection of the converted constant is crucial to realize reliable object detection. The results of the numerical simulations obtained using a two-dimensional Fresnel dataset validated the theoretical findings and illustrate the effectiveness and limitations of the designed indicator functions for small objects.
\end{abstract}

\section{Introduction}
This paper addresses the inverse scattering problem of localizing a set of small objects using scattered field data collected using a limited-aperture measurement system. Inverse scattering problems include important research topics in mathematics, physics, and engineering because they play a crucial role in various applications that are relevant to human life, including medical imaging (e.g., breast cancer detection \cite{SMSEKAKO}, brain stroke diagnosis \cite{PFTYMPKE}, and thermal therapy monitoring \cite{HSM2}), nondestructive testing (e.g., damage detection in concrete structures \cite{FFK}, eddy-current testing of damaged plates \cite{HLD}, and surface crack detection \cite{FMGD}), and radar applications (e.g., human heart motion imaging \cite{BBPALH}, mine detection in a two-layered medium \cite{DEKPS}, and through-wall imaging \cite{WZLD}). Numerous studies \cite{A2,ABT,AK2,BCS,C7,CK,N3,P5} have discussed related theories and applications. Despite being an important research topic, it is extremely challenging to solve inverse scattering problems due to their inherent nonlinearity and ill-posedness. Thus, various iterative (or quantitative) and noniterative (or qualitative) inversion techniques have been investigated.

Generally, iterative techniques have been developed to reconstruct the parameter (dielectric permittivity, electric conductivity, or magnetic permeability) distribution of a bounded domain. For example, the Newton-type method was proposed for shape reconstruction of an arc-like perfectly conducting crack \cite{K}, the Gauss--Newton method was proposed for electrical impedance tomography (EIT) \cite{ASKK}, the Born iterative technique was proposed for brain stroke detection \cite{IBA}, the Levenberg--Marquardt method was proposed to reconstruct two-dimensional isotropic and anisotropic inhomogeneities \cite{LLHH}, and the Newton--Kantorovich algorithm was developed to retrieve the permittivity distribution of the human thorax and arm \cite{MJBB}. However, as confirmed by various previous studies \cite{KSY,PL4}, the iterative process must begin with a good initial guess that is close to the true solution to ensure successful application of iteration-based algorithms and avoid various critical issues, e.g., such as nonconvergence, becoming trapped in local minima, and high computational costs. Consequently, fast algorithms are required to obtain a good initial guess.

Motivated by this issue, previous studies have investigated various noniterative techniques to retrieve the existence, location, or outline shape of arbitrary shaped objects. For example, the direct sampling method (DSM) has gained increasing popularity due to its computational efficiency and robustness against noise. Thus, the DSM has been applied in various interesting problems, e.g., the localization of two-dimensional and three-dimensional scatterers in full-view \cite{IJZ1,IJZ2,KLP1,KL4,P-DSM9} and limited-view \cite{KLAHP} inverse scattering problems. In addition, the DSM has been applied in EIT \cite {CIZ}, diffusive optical tomography \cite{CILZ}, and monostatic imaging \cite{KLP3}. The DSM has also been employed for the shape reconstruction of arbitrary shaped scatterers from far-field measurement data \cite{LZ,HK}, the identification of multipolar acoustic sources \cite{BGWL} and small perfectly conducting cracks \cite{P-DSM3}, the detection of small inhomogeneities in transverse electric polarized waves \cite{AHP2,P-DSM1}, and anomaly detection from scattering parameter data in microwave imaging \cite{P-DSM2,SLP1}. Previous studies have confirmed that the DSM is a fast, stable, and effective technique in full-view and limited-view/aperture inverse scattering problems. However, the data acquisition processes in many real-world applications are limited. For example, the diagonal elements of the scattering matrix cannot be obtained because each antenna is used exclusively for signal transmission, and the remaining antennas are utilized for signal reception (refer to the literature \cite{KLKJS} for a detailed description). Following the inverse scattering problem using an experimental Fresnel dataset \cite{BS}, a receiver rotates within a limited range based on each transmitter’s direction when the transmitter is located at a fixed position to avoid interference between the transmitter and receiver. Previous studies \cite{KLPS1,P-OSM1,P-SUB19} have demonstrated that converting unmeasurable data with a zero constant guarantees good results; however, to the best of our knowledge, the theoretical implications of this approach to the DSM have not been explored. In addition, the impact of the bistatic angle on imaging performance has not been analyzed theoretically.

Thus, in this paper, we consider the application of the DSM to a real-world limited-aperture inverse scattering problem to identify a set of small objects. To this end, we design an indicator function of the DSM by converting unmeasurable scattered field data to a fixed nonzero constant. In addition, to explore the impact of the converted constant and bistatic angle, we demonstrate that the designed indicator function can be expressed by an infinite series of Bessel functions of integer order of the first kind, the material properties of the objects and the background, the converted constant, and the bistatic angle. Based on the theoretical results, it can be explained that imaging performance is strongly dependent on both the converted constant and the bistatic angle. The conversion of the zero constant ensures good imaging results; however, setting the bistatic angle to $\SI{180}{\degree}$ does not. To demonstrate this theoretical result, various numerical simulation results conducted using a Fresnel dataset are discussed and analyzed.

The remainder of this paper is organized as follows. Section \ref{sec:2} introduces the two-dimensional problem, including the configuration of the limited-aperture data measurement process and the formulation of the integral equation for the scattered field in the presence of a set of small objects. Section \ref{sec:3} presents the design of the indicator function for the DSM, the mathematical structure of the designed indicator function with a single source, and its various properties. Then, Section \ref{sec:4} extends the framework to the DSM with multiple sources, explores its mathematical structure and discusses its various properties, including the observed improvements. Section \ref{sec:5} discusses and analyzes the numerical simulation results obtained on the Fresnel dataset that support our theoretical findings. Finally, the paper is concluding in Section \ref{sec:6} with a summary of key insights and potential future research directions.

\section{Problem Setup and Scattered Field}\label{sec:2}
Here, we summarize the basic concept of the scattered field in the presence of small dielectric objects and the configuration of the data measurement process. To set up the problem mathematically, $\Omega$ denotes a two-dimensional homogeneous region to be inspected. Note that $\Omega$ is considered as a vacuum throughout this paper. For each $\mr\in\Omega$, the values of the background conductivity, permeability, and permittivity are set to $\sigma_0\approx0$, $\mu_0=4\pi\times \SI{e-7}{\henry/\meter}$, and $\eps_0=\SI{8.854e-12}{\farad/\meter}$, respectively, at the given angular frequency of operation $\omega=2\pi f$. Here, $k=\omega\sqrt{\eps_0\mu_0}$ denotes the lossless background wavenumber.

Assume that $\Omega$ contains a finite number of small objects $\set{D_s:s=1,2,\ldots,S}$, each of the form
\[D_s=\mr_s+\alpha_s\mB_s,\]
where $\mB_s\in\mathcal{C}^2$ is a bounded domain containing the origin. Throughout this paper, we set all objects $D_s$ as being linear, isotropic, time-invariant, and completely characterized by their permittivity value $\eps_s$ at $\omega$. We also assume that $\eps_s>\eps_0$ for all $s=1,2,\ldots,S$ and that all objects are well-separated from each other. With this setting, we introduce the following piecewise constant function of permittivity:
\[\eps(\mr)=\left\{\begin{array}{ccl}
\eps_s&\text{if}&\mr\in D_s\\
\eps_0&\text{if}&\mr\in \Omega\backslash\overline{D},
\end{array}\right.\]
where $D$ denotes the collection of objects $D_s$.

In this study, a bistatic measurement system is considered. Here, when a transmitter is fixed, the receiver rotates while measuring the data (as shown in Figure \ref{Configuration_Fresnel}). To describe the measurement configuration, $\mathcal{P}_m$ and $\mathcal{Q}_n$ denote the $m$th transmitter and $n$th receiver, respectively. In addition, $\mrp_m$ and $\mq_n$ denote the locations of $\mathcal{P}_m$ and $\mathcal{Q}_n$, respectively, such that
\[\mrp_m=P\vv_m=P(\cos\vartheta_m,\sin\vartheta_m)\qand\mq_n=Q\vt_n=Q(\cos\theta_n,\sin\theta_n),\]
where for $m=1,2,\ldots,M$ and $n=1,2,\ldots,N$
\[\vartheta_m=(m-1)\triangle\vartheta=\frac{2\pi(m-1)}{M}\qand\theta_n=(n-1)\triangle\theta=\frac{2\pi(n-1)}{N}.\]
Furthermore, $\mathcal{P}$ and $\mathcal{Q}$ denote the collection of transmitters $\mathcal{P}_m$ and receivers $\mathcal{Q}_n$, respectively.

\begin{figure}[h]
\begin{center}
\begin{tikzpicture}[scale=1.8]

% Antenna drawing
\def\RT{0.65*2};
\def\RR{0.76*2};
\def\RO{0.70*2};
\def\Edge{0.15*2};

\draw[black,dashed,-] (0,0) -- ({\RT*cos(0)},{\RT*sin(0)});
\draw[gray,dashed,-] (0,0) -- ({\RT*cos(30)},{\RT*sin(30)});
\draw[gray,dashed,-] (0,0) -- ({\RR*cos(90)},{\RR*sin(90)});
\draw[gray,dashed,-] (0,0) -- ({\RR*cos(330)},{\RR*sin(330)});

\draw[black,solid,-stealth] (0.35,0) arc (0:30:0.35);
\node[black] at ({0.7*cos(70)},{0.35*sin(70)}) {$\alpha$};

\draw[black,solid,-stealth] ({0.25*cos(30)},{0.25*sin(30)}) arc (30:90:0.25);
\node[black] at ({0.55*cos(20)},{0.35*sin(20)}) {$\vartheta_m$};

\draw[green,fill=green] ({\RT*cos(30)},{\RT*sin(30)}) circle (0.05cm);
\draw[black,fill=black] ({\RT*cos(30)},{\RT*sin(30)}) circle (0.02cm);

\draw[black,solid,-stealth] ({0.15*cos(30)},{0.15*sin(30)}) arc (30:330:0.15);
\node[black] at ({-0.5*cos(0)},0) {$2\pi-\alpha$};

\foreach \beta in {90,95,...,330}
{\draw[red!20!white,fill=red!20!white] ({\RR*cos(\beta)},{\RR*sin(\beta)}) circle (0.05cm);
\draw[black!20!white,fill=black!20!white] ({\RR*cos(\beta)},{\RR*sin(\beta)}) circle (0.02cm);}

%\foreach \beta in {335,340,...,450}
%{\draw[red!20!white,fill=red!20!white] ({\RR*cos(\beta)},{\RR*sin(\beta)}) circle (0.05cm);
%\draw[black!20!white,fill=black!20!white] ({\RR*cos(\beta)},{\RR*sin(\beta)}) circle (0.02cm);}

%\foreach \alpha in {335,340,...,445}
%{\draw[cyan!20!white,fill=cyan!20!white] ({\RR*cos(\alpha)},{\RR*sin(\alpha)}) circle (0.05cm);
%\draw[black!20!white,fill=black!20!white] ({\RR*cos(\alpha)},{\RR*sin(\alpha)}) circle (0.02cm);}

\draw[gray,solid,-stealth] ({\RO*cos(90)},{\RO*sin(90)}) arc (90:330:\RO);

%\draw[red,fill=red] ({\RR*cos(90)},{\RR*sin(90)}) circle (0.05cm);
%\draw[black,fill=black] ({\RR*cos(90)},{\RR*sin(90)}) circle (0.02cm);

\draw[red,fill=red] ({\RR*cos(200)},{\RR*sin(200)}) circle (0.05cm);
\draw[black,fill=black] ({\RR*cos(200)},{\RR*sin(200)}) circle (0.02cm);

\end{tikzpicture}\qquad
\begin{tikzpicture}[scale=1.8]

% Antenna drawing
\def\RT{0.65*2};
\def\RR{0.76*2};
\def\RO{0.7*2};
\def\Edge{0.15*2};

\draw[green,fill=green] ({\RT*cos(100)},{\RT*sin(100)}) circle (0.05cm);
\draw[black,fill=black] ({\RT*cos(100)},{\RT*sin(100)}) circle (0.02cm);

\foreach \beta in {160,165,...,410}
{\draw[red,fill=red] ({\RR*cos(\beta)},{\RR*sin(\beta)}) circle (0.05cm);
\draw[black,fill=black] ({\RR*cos(\beta)},{\RR*sin(\beta)}) circle (0.02cm);}

%\draw[red,fill=red] ({\RR*cos(160)},{\RR*sin(160)}) circle (0.05cm);
%\draw[black,fill=black] ({\RR*cos(160)},{\RR*sin(160)}) circle (0.02cm);

\draw[gray,solid,-stealth] ({\RO*cos(160)},{\RO*sin(160)}) arc (160:410:\RO);

\foreach \alpha in {55,60,...,155}
{\draw[cyan!20!white,fill=cyan!20!white] ({\RR*cos(\alpha)},{\RR*sin(\alpha)}) circle (0.05cm);
\draw[black!20!white,fill=black!20!white] ({\RR*cos(\alpha)},{\RR*sin(\alpha)}) circle (0.02cm);}

\draw[green,fill=green] (-0.8,0.3) circle (0.05cm);
\draw[black,fill=black] (-0.8,0.3) circle (0.02cm);
\node[right] at (-0.8,0.3) {\text{~~transmitter}};
\draw[red,fill=red] (-0.8,0) circle (0.05cm);
\draw[black,fill=black] (-0.8,0) circle (0.02cm);
\node[right] at (-0.8,0) {\text{~~receiver for }$n\in\mathcal{I}_m$};
\draw[cyan!20!white,fill=cyan!20!white] (-0.8,-0.3) circle (0.05cm);
\draw[black!20!white,fill=black!20!white] (-0.8,-0.3) circle (0.02cm);
\node[right] at (-0.8,-0.3) {\text{~~receiver for }$n\in\mathcal{J}_m$};

\end{tikzpicture}
\caption{\label{Configuration_Fresnel}Bistatic measurement system.}
\end{center}
\end{figure}
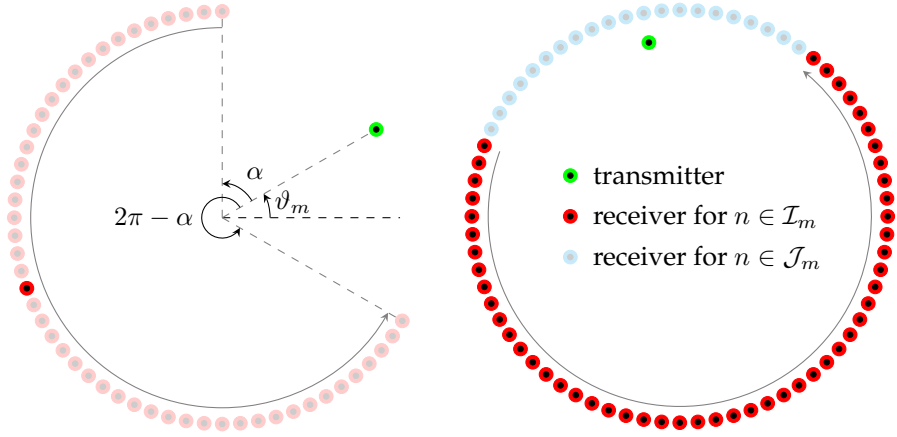

In this paper, $u(\mr,\mrp_m)$ is the solution to the Helmholtz equation in the presence of $D$
\[\triangle u(\mr,\mrp_m)+\omega^2\eps(\mr)\mub u(\mr,\mrp_m)=0,\quad\mr\in\Omega\]
with transmission condition at $\p D_s$. The incident field $u_0(\mr,\mrp_m)$ generated at $\mathcal{P}_m$ satisfies the following equation:
\[\triangle u_0(\mr,\mrp_m)+k^2 u_0(\mr,\mrp_m)=0,\quad\mr\in\Omega.\]
Here, the time harmonic dependence $e^{-i\omega t}$ is assumed, $k^2$ is not an eigenvalue for the operator $-\triangle$, and\[u_{0}(\mr,\mr')=G(\mr,\mr')=-\frac{i}{4}H_0^{(1)}(k|\mr-\mr'|),\quad\mr\ne\mr',\]
where $H_0^{(1)}$ denotes the zero order Hankel function of the first kind. $u_{\scat}(\mq_n,\mrp_m)$ denotes the scattered field measured at the $\mathcal{Q}_n$ corresponding to the incident field that satisfies $u_{\scat}(\mq_n,\mrp_m)=u(\mq_n,\mrp_m)-u_0(\mq_n,\mrp_m)$ and the Sommerfeld radiation condition:
\[\lim_{|\mr|\to\infty}\sqrt{|\mr|}\left(\frac{\p u_{\scat}(\mr,\mrp_m)}{\p|\mr|}-i ku_{\scat}(\mr,\mrp_m)\right)=0\quad\text{uniformly in all directions}\quad\frac{\mr}{|\mr|}.\]
Let us emphasize that owing to \cite{CK}, $u_{\scat}(\mq_n,\mrp_m)$ can be represented as the following integral equation formula with an unknown density function $\varphi$:
\begin{equation}\label{ScatteredField-SLP}
u_{\scat}(\mq_n,\mrp_m)=\int_DG(\mq_n,\mr')\varphi(\mr',\mrp_m)\rd\mr'.
\end{equation}
Note that a complete expression of $u_{\scat}(\mq_n,\mrp_m)$ is required to design an indicator function for the DSM; however, the complete form of $\varphi$ is unknown without a priori information of $D$. Thus, based on the findings of a previous study \cite{BCS}, we adopt the following alternative representation formula:
\begin{equation}\label{ScatteredField}
u_{\scat}(\mq_n,\mrp_m)\approx k^2\int_D\mathcal{O}(\mr')G(\mq_n,\mr')G(\mr',\mrp_m)\rd\mr',\quad\mathcal{O}(\mr')=\frac{\eps(\mr')-\epsb}{\epsb\mub}.
\end{equation}

\section{Indicator Function of the DSM with a Single Source}\label{sec:3}
\subsection{Introduction to the Indicator Function}
Here, we introduce the indicator function of the DSM with a single source. For a fixed transmitter $\mathcal{P}_m$, we generate an arrangement of measurement data such that
\begin{equation}\label{Measurement_Single}
\mathbb{S}(m)=\begin{bmatrix}
u_{\scat}(\mq_1,\mrp_m)\\
u_{\scat}(\mq_2,\mrp_m)\\
\vdots\\
u_{\scat}(\mq_N,\mrp_m)
\end{bmatrix}.
\end{equation}
By applying the mean-value theorem to \eqref{ScatteredField}, $u_{\scat}(\mq_n,\mrp_m)$ can be written as follows:
\begin{equation}\label{ScatteredField-MVT}
u_{\scat}(\mq_n,\mrp_m)\approx k^2\sum_{s=1}^{S}\left(\frac{\eps_s-\epsb}{\epsb\mub}\right)\area(D_s)G(\mrp_m,\mr_s)G(\mq_n,\mr_s).
\end{equation}
Correspondingly, $\mathbb{S}(m)$ becomes:
\begin{align*}
\mathbb{S}(m)&\approx k^2\sum_{s=1}^{S}\left(\frac{\eps_s-\epsb}{\epsb\mub}\right)\area(D_s)G(\mrp_m,\mr_s)
\begin{bmatrix}
G(\mq_1,\mr_s)\\
G(\mq_2,\mr_s)\\
\vdots\\
G(\mq_N,\mr_s)
\end{bmatrix}\\
&:=k^2\sum_{s=1}^{S}\left(\frac{\eps_s-\epsb}{\epsb\mub}\right)\area(D_s)G(\mrp_m,\mr_s)\mathbb{G}(\mr_s).
\end{align*}

Based on the above expression, we generate a test vector corresponding to the receivers. Here, for each search point $\mr\in\Omega$, we generate:
\begin{equation}\label{TestVector_Receiver}
\mathbb{Q}(\mr)=\begin{bmatrix}
G(\mq_1,\mr)\\
G(\mq_2,\mr)\\
\vdots\\
G(\mq_N,\mr)
\end{bmatrix}.
\end{equation}
Then, based on the orthogonality property of the Hilbert space $\ell^2$, the value
\begin{equation}\label{Orthogonality_Property}
\langle\mG(\mr_s),\mQ(\mr)\rangle_{\ell^2(\mathcal{Q})}:=\mG(\mr_s)\cdot\overline{\mQ(\mr)}=\sum_{n=1}^{N}G(\mq_n,\mr_s)\overline{G(\mq_n,\mr)}
\end{equation}
reaches its maximum when $\mr=\mr_s\in D_s$. Correspondingly, by defining the norm $\|\cdot\|_{\ell^2(\mathcal{Q})}=\langle\cdot,\cdot\rangle_{\ell^2(\mathcal{Q})}^{1/2}$, the following classical indicator function of the DSM can be introduced:
\begin{equation}\label{Classical_Indicator}
\mathfrak{F}(\mr,m)=\frac{|\langle\mS(m),\mQ(\mr)\rangle_{\ell^2(\mathcal{Q})}|}{\|\mS(m)\|_{\ell^2(\mathcal{Q})}\|\mQ(\mr)\|_{\ell^2(\mathcal{Q})}}.
\end{equation}
Then, $\mathfrak{F}(\mr,m)\approx1$ when $\mr=\mr_s\in D_s$, $s=1,2,\ldots,S$; thus, by regarding peaks of large magnitudes in the map of $\mathfrak{F}(\mr,m)$, it is possible to identify the objects $D_s$. Refer to the literature \cite{IJZ1,KLP1} for a more detailed description.

However, $u_{\scat}(\mq_n,\mrp_m)$ cannot be expressed as \eqref{ScatteredField} due to the interference between the antennas because using the classical indicator function in \eqref{Classical_Indicator} in this case is nonsense. To avoid such interference, when a transmitter is placed at a fixed position $\mrp_m$, a receiver is rotating within the range $\vartheta_m+\alpha$ to $\vartheta_m+2\pi-\alpha$ for $0<\alpha<\pi$ (in \cite{BS}, the value of $\alpha$ is set to $\SI{60}{\degree}$), as shown in Figure \ref{Configuration_Fresnel}. Here, $\alpha$ denotes the minimum bistatic angle, i.e., the angle between the transmitter and the receiver. This means that the complete elements of $\mS(m)$ of \eqref{Measurement_Single} cannot be used to design the indicator function in \eqref{Classical_Indicator}. Now, we define the following two index sets:
\begin{align*}
\mathcal{I}(m)&=\set{n:\vartheta_m+\alpha\leq\theta_n\leq\vartheta_m+2\pi-\alpha},\\
\mathcal{J}(m)&=\set{n:\vartheta_m-\alpha<\theta_n<\vartheta_m+\alpha}=\set{1,2,\ldots,N}\backslash\mathcal{I}(m).
\end{align*}
Then, it is possible to collect $u_{\scat}(\mq_n,\mrp_m)$ for $n\in\mathcal{I}(m)$. Thus, by converting uncollectible data to a fixed constant $C$, we generate an arrangement of measurement data such that
\[\mathbb{S}(C,m)=[a_{nm}],\quad\text{where}\quad a_{nm}=\left\{\begin{array}{cl}
\smallskip u_{\scat}(\mq_n,\mrp_m),&n\in\mathcal{I}(m)\\
C,&n\in\mathcal{J}(m).
\end{array}\right.\]
Figure \ref{MSR} illustrates matrix $\mathbb{K}$, where
\[\mathbb{K}=\Big[|\mS(0,1)|\quad|\mS(0,2)|\quad\cdots\quad|\mS(0,M)|\Big].\]
With this, we introduce the following indicator function of the DSM. Here, for each $\mr\in\Omega$ and fixed constant $C$, we obtain:
\begin{equation}\label{IndicatorFunction_Single}
\mathfrak{F}_{\dsm}(\mr,m,C)=\frac{|\langle\mS(C,m),\mQ(\mr)\rangle_{\ell^2(\mathcal{Q})}|}{\|\mS(C,m)\|_{\ell^2(\mathcal{Q})}\|\mQ(\mr)\|_{\ell^2(\mathcal{Q})}}.
\end{equation}
Note that the test vector $\mQ(\mr)$ is given in \eqref{TestVector_Receiver}.

\begin{figure}[h]
\includegraphics[width=.33\textwidth]{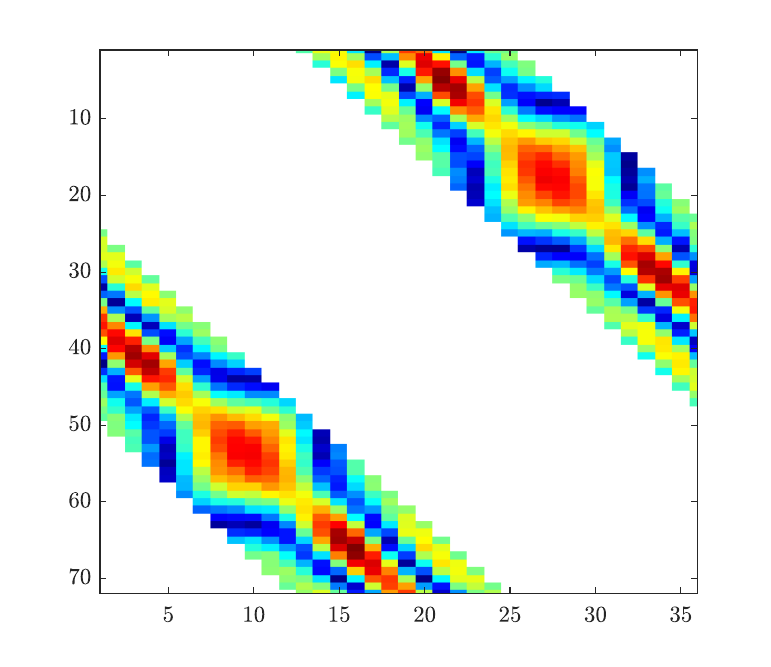}\hfill
\includegraphics[width=.33\textwidth]{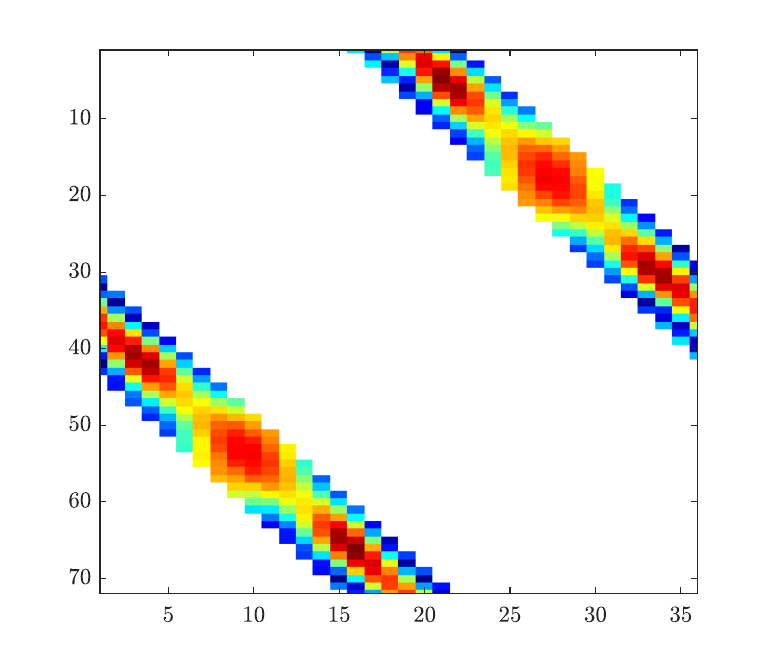}\hfill
\includegraphics[width=.33\textwidth]{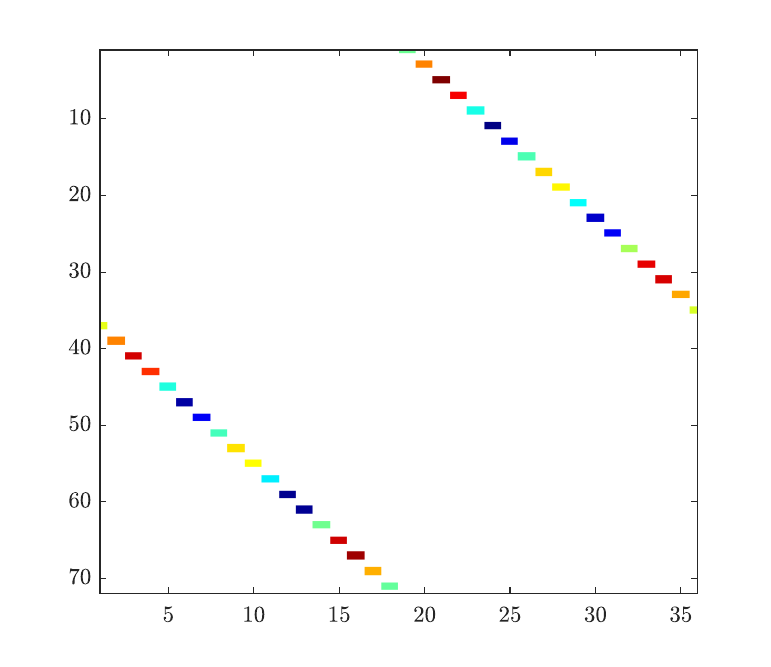}
\caption{\label{MSR}Visualization of the modulus of $\mathbb{K}$ for $M=36$ and $N=72$ with $\alpha=\SI{120}{\degree}$ (left), $\alpha=\SI{150}{\degree}$ (middle), and $\alpha=\SI{180}{\degree}$ (right).}
\end{figure}

\subsection{Mathematical Structure of the Indicator Function}
Throughout the numerical simulation results discussed in Section \ref{sec:5}, the imaging performance is strongly dependent on the location of $\mathcal{P}_m$ and the value of $C$. To explain this phenomenon theoretically, we investigate the mathematical structure of the indicator function as follows.

\begin{Theorem}\label{Theorem_Single}
Here, assume that $4k|\mr-\mq_n|\gg1$ for all $\mr\in\Omega$ and $n=1,2,\ldots,N$. Then, $\mathfrak{F}_{\dsm}(\mr,m,C)$ can be expressed as follows:
\begin{equation}\label{Structure_Single}
\mathfrak{F}_{\dsm}(\mr,m,C)\approx\frac{|\Phi(\mr,\mr')+\Psi(\mr)|}{\displaystyle\max_{\mr\in\Omega}|\Phi(\mr,\mr')+\Psi(\mr)|},
\end{equation}
where
\[\Phi(\mr,\mr')=\frac{k^2\#[\mathcal{I}(m)]}{2\sqrt{kQ\pi}}\int_D\mathcal{O}(\mr')G(\mrp_m,\mr')\Big(J_0(k|\mr-\mr'|)+\mathcal{E}_1(\mr,\mr',m)\Big)\rd\mr'\]
and
\[\Psi(\mr)=-C(1+i)\#[\mathcal{J}(m)]\Big(J_0(k|\mr|)+\mathcal{E}_2(\mr,m)\Big).\]
Here, $\#[\mathcal{I}(m)]$ denotes the number of elements in the set $\mathcal{I}(m)$, and $J_n$ is the Bessel function of order $n$. In addition, we obtain the following:
\begin{align}
\begin{aligned}\label{DisturbFactor}
&\mathcal{E}_1(\mr,\mr',m)=2\sum_{p=1}^{\infty}i^pJ_p(k|\mr-\mr'|)\cos\big(p(\vartheta_m-\phi)\big)\sinc\big(p(\pi-\alpha)\big)\\
&\mathcal{E}_2(\mr,m)=2\sum_{q=1}^{\infty}i^qJ_q(k|\mr|)\cos\big(q(\vartheta_m-\psi)\big)\sinc(q\alpha),
\end{aligned}
\end{align}
where $\sinc(x)$ denotes the unnormalized sinc function defined for $x\ne0$ by
\[\sinc(x)=\frac{\sin x}{x}.\]
\end{Theorem}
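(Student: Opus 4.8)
The plan is to unwind the definition \eqref{IndicatorFunction_Single} directly. Since both factors in its denominator turn out to be essentially independent of the search point, the whole content of the theorem is an asymptotic evaluation of the numerator $\langle\mS(C,m),\mQ(\mr)\rangle_{\ell^2(\mathcal{Q})}$. First I would split this inner product along the two index sets,
\[\langle\mS(C,m),\mQ(\mr)\rangle_{\ell^2(\mathcal{Q})}=\sum_{n\in\mathcal{I}(m)}u_{\scat}(\mq_n,\mrp_m)\overline{G(\mq_n,\mr)}+C\sum_{n\in\mathcal{J}(m)}\overline{G(\mq_n,\mr)},\]
and substitute the Born--type representation \eqref{ScatteredField} for $u_{\scat}(\mq_n,\mrp_m)$ in the first sum, moving the spatial integral over $D$ to the outside.

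Second, I would invoke the hypothesis $4k|\mr-\mq_n|\gg1$, which makes the argument of every Hankel function evaluated at a receiver large, to replace $G(\mq_n,\cdot)=-\frac{i}{4}H_0^{(1)}(k|\mq_n-\cdot|)$ and its conjugate by the leading asymptotics $H_0^{(1)}(z)\approx\sqrt{2/(\pi z)}\,e^{i(z-\pi/4)}$, together with the far-field expansion $|\mq_n-\mr|\approx Q-\vt_n\cdot\mr$ and its analogue for $\mr'\in D$; this is legitimate because $\Omega$ and $D$ lie in a bounded region near the origin while $|\mq_n|=Q$. Each receiver Green's function then becomes a plane-wave factor $e^{\pm ik\vt_n\cdot\mr}$ or $e^{\pm ik\vt_n\cdot\mr'}$ times a receiver-independent amplitude and phase. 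In the data sum the amplitudes coming from $G(\mq_n,\mr')$ and from $\overline{G(\mq_n,\mr)}$ multiply to a real constant, so after pulling it and $\int_D$ outside one is left with $\sum_{n\in\mathcal{I}(m)}e^{ik\vt_n\cdot(\mr-\mr')}$; in the constant sum only the single conjugated Green's function appears, contributing the $(1+i)$-type factor from the asymptotics, and one is left with a constant multiple of $C\sum_{n\in\mathcal{J}(m)}e^{ik\vt_n\cdot\mr}$.

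Third comes the core computation, the evaluation of these partial exponential sums. Writing $\mr-\mr'=|\mr-\mr'|(\cos\phi,\sin\phi)$ and $\mr=|\mr|(\cos\psi,\sin\psi)$, applying the Jacobi--Anger expansion $e^{it\cos\gamma}=\sum_{p\in\mathbb{Z}}i^{p}J_{p}(t)e^{ip\gamma}$, and interchanging the (absolutely convergent, since $|J_p(t)|$ decays faster than any geometric sequence in $p$) sum over $p$ with the finite receiver sum and, in the data term, with $\int_D$, I would then approximate the sum over $n$ in each index set by the corresponding Riemann integral over the arc, using that the $\theta_n$ are equispaced with step $2\pi/N$ and that $\#[\mathcal{I}(m)]\approx N(\pi-\alpha)/\pi$ and $\#[\mathcal{J}(m)]\approx N\alpha/\pi$. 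Each mode integral $\int e^{ip\theta}\rd\theta$ over $[\vartheta_m+\alpha,\vartheta_m+2\pi-\alpha]$, respectively $[\vartheta_m-\alpha,\vartheta_m+\alpha]$, is elementary and, using $e^{2\pi ip}=1$, evaluates to a term proportional to $\#[\mathcal{I}(m)]\,\sinc\big(p(\pi-\alpha)\big)e^{ip\vartheta_m}$, respectively $\#[\mathcal{J}(m)]\,\sinc(p\alpha)e^{ip\vartheta_m}$ (the sign $(-1)^p$ that accompanies the first arc is absorbed into the choice of the angle $\phi$). Pairing the $+p$ and $-p$ contributions converts $e^{ip(\vartheta_m-\phi)}$ into $2\cos\big(p(\vartheta_m-\phi)\big)$ and isolates the $p=0$ term, producing exactly $\#[\mathcal{I}(m)]\big(J_0(k|\mr-\mr'|)+\mathcal{E}_1(\mr,\mr',m)\big)$ and $\#[\mathcal{J}(m)]\big(J_0(k|\mr|)+\mathcal{E}_2(\mr,m)\big)$. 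Re-inserting the constants shows the data sum to be a fixed multiple of $\Phi(\mr,\mr')$ and the constant sum the same multiple of $\Psi(\mr)$; since $\|\mS(C,m)\|_{\ell^2(\mathcal{Q})}$ is independent of $\mr$ and $\|\mQ(\mr)\|_{\ell^2(\mathcal{Q})}^2=\sum_{n}|G(\mq_n,\mr)|^2\approx N/(8\pi kQ)$ is independent of $\mr$ to leading order, the normalization in \eqref{IndicatorFunction_Single} collapses to $|\Phi+\Psi|/\max_{\mr\in\Omega}|\Phi+\Psi|$, which is \eqref{Structure_Single}.

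I expect the delicate step to be the third one: controlling, uniformly in $\mr\in\Omega$ and $\mr'\in D$, the whole chain of approximations — the large-argument and far-field replacement of the Hankel functions, the interchange of the infinite Jacobi--Anger sum with the receiver sum and the spatial integral, and above all the replacement of the discrete receiver sum over the limited arc by a Fourier integral — and verifying that the truncated-arc integrals reproduce precisely the stated $\sinc$ factors. The accompanying bookkeeping, namely tracking which global far-field amplitude and phase factors cancel inside $\Phi$ (where two receiver Green's functions multiply) but persist inside $\Psi$ (where only one does), is what makes the relative weighting of $\Phi$ and $\Psi$ — hence the dependence on $C$ and on the bistatic angle — come out right.
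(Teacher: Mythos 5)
Your proposal follows essentially the same route as the paper's proof: substitute the Born-type representation \eqref{ScatteredField}, replace the receiver Green's functions by their large-argument plane-wave asymptotics \eqref{Asymptotic_Hankel}, split the inner product over $\mathcal{I}(m)$ and $\mathcal{J}(m)$, and evaluate the partial exponential sums by a Riemann-sum/Jacobi--Anger argument over the two arcs to produce the Bessel series with the $\sinc\big(p(\pi-\alpha)\big)$ and $\sinc(q\alpha)$ factors. The only difference is the final normalization, where the paper simply appeals to H{\"o}lder's inequality while you argue that $\|\mS(C,m)\|_{\ell^2(\mathcal{Q})}$ and $\|\mQ(\mr)\|_{\ell^2(\mathcal{Q})}$ are essentially independent of $\mr$ --- a justification that is, if anything, more explicit than the paper's.
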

\begin{proof}
Since $4k|\mr-\mq_n|\gg1$ for all $n$, the following asymptotic form holds (refer to the \cite{CK} for additional information):
\begin{equation}\label{Asymptotic_Hankel}
-\frac{i}{4}H_0^{(1)}(k|\mr-\mq_n|)=-\frac{(1+i)e^{ik Q}}{4\sqrt{kQ\pi}}e^{-ik\vt_n\cdot\mr}.
\end{equation}
Then, since
\[\mQ(\mr)=-\frac{(1+i)e^{-ikQ}}{4\sqrt{kQ\pi}}
\begin{bmatrix}
e^{-ik\vt_1\cdot\mr}\\
e^{-ik\vt_2\cdot\mr}\\
\vdots\\
e^{-ik\vt_N\cdot\mr}
\end{bmatrix}\]
and
\[u_{\scat}(\mq_n,\mrp_m)=-\frac{k^2(1+i)e^{ik Q}}{4\sqrt{kQ\pi}}\int_D\mathcal{O}(\mr')G(\mr',\mrp_m)e^{-ik\vt_n\cdot\mr'}\rd\mr',\]
we can evaluate:
\begin{align*}
\langle\mS(C,m),\mQ(\mr)\rangle_{\ell^2(\mathcal{Q})}=&\frac{ik^2}{8kQ\pi}\int_D\mathcal{O}(\mr')G(\mr',\mrp_m)\sum_{n\in\mathcal{I}(m)}e^{ik\vt_n\cdot(\mr-\mr')}\rd\mr'\\
&-\frac{C(1+i)e^{-ikQ}}{4\sqrt{kQ\pi}}\sum_{n\in\mathcal{J}(m)}e^{ik\vt_n\cdot\mr}
\end{align*}

The following relation holds uniformly (refer to the literature \cite{P-SUB3} for the derivation).
\begin{align}
\begin{aligned}\label{Jacobi--Anger}
&\sum_{n=1}^{N}e^{ix\cos(\theta_n-\phi)}=\frac{N}{N\triangle\theta}\sum_{n=1}^{N}e^{ix\cos(\theta_n-\phi)}\triangle\theta\approx\frac{N}{\theta_N-\theta_1}\int_{\theta_1}^{\theta_N}e^{ix\cos(\theta-\phi)}\rd\theta\\
&=N\left(J_0(x)+\frac{4}{\theta_N-\theta_1}\sum_{p=1}^{\infty}\frac{i^p}{p}J_p(x)\cos\frac{p(\theta_N+\theta_1-2\phi)}{2}\sin\frac{p(\theta_N-\theta_1)}{2}\right)\\
&=N\left(J_0(x)+2\sum_{p=1}^{\infty}i^pJ_p(x)\cos\frac{p(\theta_N+\theta_1-2\phi)}{2}\sinc\big(p(\theta_N-\theta_1)\big)\right).
\end{aligned}
\end{align}
Thus, by letting $\mr-\mr'=|\mr-\mr'|(\cos\phi,\sin\phi)$ and $\mr=|\mr|(\cos\psi,\sin\psi)$, we can evaluate
\begin{align*}
\sum_{n\in \mathcal{I}(m)}&e^{ik\vt_n\cdot(\mr-\mr')}=\frac{\#[\mathcal{I}(m)]}{\#[\mathcal{I}(m)]\triangle\theta}\sum_{n\in \mathcal{I}(m)}e^{ik\vt_n\cdot(\mr-\mr')}\triangle\theta\\
&\approx\frac{\#[\mathcal{I}(m)]}{2\pi-2\alpha}\int_{\vartheta_m+\alpha}^{\vartheta_m+2\pi-\alpha}e^{ik|\mr-\mr'|\cos(\theta-\phi)}\rd\theta\\
&=\#[\mathcal{I}(m)]\left[J_0(k|\mr-\mr'|)+\frac{2}{\pi-
\alpha}\sum_{p=1}^{\infty}\frac{i^p}{p}J_p(k|\mr-\mr'|)\cos\big(p(\vartheta_m-\phi)\big)\sin\big(p(\pi-\alpha)\big)\right]\\
&=\#[\mathcal{I}(m)]\left[J_0(k|\mr-\mr'|)+2\sum_{p=1}^{\infty}i^pJ_p(k|\mr-\mr'|)\cos\big(p(\vartheta_m-\phi)\big)\frac{\sin\big(p(\pi-\alpha)\big)}{p(\pi-\alpha)}\right]
%&=\#[\mathcal{I}(m)]\left[J_0(k|\mr-\mr'|)-\frac{2\alpha}{\pi-\alpha}\sum_{p=1}^{\infty}i^pJ_p(k|\mr-\mr'|)\cos\big(p(\vartheta_m-\phi)\big)\frac{\sin(p\alpha)}{p\alpha}\right]
\end{align*}
and
\begin{align*}
\sum_{n\in\mathcal{J}(m)}e^{ik\vt_n\cdot\mr}&=\frac{\#[\mathcal{J}(m)]}{\#[\mathcal{J}(m)]\triangle\theta}\sum_{n\in\mathcal{J}(m)}e^{ik\vt_n\cdot\mr}\triangle\theta\approx\frac{\#[\mathcal{J}(m)]}{2\alpha}\int_{\vartheta_m-\alpha}^{\vartheta_m+\alpha}e^{ik|\mr|\cos(\theta-\psi)}\rd\theta\\
&=\#[\mathcal{J}(m)]\left[J_0(k|\mr|)+\frac{2}{\alpha}\sum_{q=1}^{\infty}\frac{i^q}{q}J_q(k|\mr|)\cos\big(q(\vartheta_m-\psi)\big)\sin(q\alpha)\right]\\
&=\#[\mathcal{J}(m)]\left[J_0(k|\mr|)+2\sum_{q=1}^{\infty}i^qJ_q(k|\mr|)\cos\big(q(\vartheta_m-\psi)\big)\frac{\sin(q\alpha)}{q\alpha}\right].
\end{align*}
Therefore, we can derive
\begin{multline}\label{Structure_InnerProduct}
\langle\mS(C,m),\mQ(\mr)\rangle_{\ell^2(\mathcal{Q})}=\frac{ik^2\#[\mathcal{I}(m)]}{8kQ\pi}\int_D\mathcal{O}(\mr')G(\mr',\mrp_m)\Big(J_0(k|\mr-\mr'|)+\mathcal{E}_1(\mr,\mr',m)\Big)\rd\mr'\\
-\frac{C(1+i)\#[\mathcal{J}(m)]e^{-ikQ}}{4\sqrt{kQ\pi}}\Big(J_0(k|\mr|)+\mathcal{E}_2(\mr,m)\Big).
\end{multline}
Finally, the structure given in \eqref{Structure_Single} can be obtained by applying H{\"o}lder's inequality as follows
\[\langle\mS(C,m),\mQ(\mr)\rangle_{\ell^2(\mathcal{Q})}\leq\|\mS(C,m)\|_{\ell^2(\mathcal{Q})}\|\mQ(\mr)\|_{\ell^2(\mathcal{Q})}.\]
\end{proof}

\subsection{Properties of Indicator Function with a Single Source}
Following the result in Theorem \ref{Theorem_Single}, here, we discuss some properties of the indicator function with a single source.

\begin{Discussion}[Composition of Indicator Function]\label{discussion1}
Since $J_0(0)=1$ and $J_p(0)=0$ for $p\ne0$, the factor $J_0(k|\mr-\mr'|)$ contributes to the object detection because $J_0(k|\mr-\mr'|)=1$ when $\mr=\mr'\in D$. However, due to the factor $G(\mrp_m,\mr')$, the imaging performance is strongly dependent on the location of the transmitter. In contrast, the factor $\mathcal{E}_1(\mr,\mr',m)$ disturbs the object detection because $\mathcal{E}_1(\mr,\mr',m)=0$ when $\mr=\mr'\in D$ and generates several artifacts based on the oscillation property of the Bessel function. In addition, due to the factor $J_0(k|\mr|)$, a peak of large magnitude will appear at the origin, which disturbs the object detection (unless the object is located at the origin). Finally, the factor $\mathcal{E}_2(\mr,m)$ generates several artifacts and does not contribute to the object detection.
\end{Discussion}

\begin{Discussion}[Influence of Bistatic Angle]\label{discussion2}
Based on the identified structure \eqref{Structure_Single}, we can observe that the imaging performance of $\mathfrak{F}_{\dsm}(\mr,m,C)$ is strongly dependent on $\alpha$. If $\alpha$ is sufficiently small such that $\alpha\longrightarrow0+$, then since $\#[\mathcal{J}(m)]\longrightarrow0+$ and $\sinc(p\pi)=0$ for any integer $p$,
\[\lim_{\alpha\to0+}\mathcal{E}_1(\mr,\mr',m)=2\sum_{p=1}^{\infty}i^pJ_p(k|\mr-\mr'|)\cos\big(p(\vartheta_m-\phi)\big)\bigg(\lim_{\alpha\to0+}\sinc\big(p(\pi-\alpha)\big)\bigg)=0,\]
the factor $\Phi(\mr,\mr')$ becomes
\[\Phi(\mr,\mr')=\frac{k^2\#[\mathcal{I}(m)]}{2\sqrt{kQ\pi}}\int_D\mathcal{O}(\mr')G(\mrp_m,\mr')J_0(k|\mr-\mr'|)\rd\mr'.\]
Thus, a good imaging result is expected when $\alpha$ is small because the disturbing factors are eliminated. Otherwise, if $\alpha\longrightarrow\pi-$, then since
\[\lim_{\alpha\to\pi-}\sinc\big(p(\pi-\alpha)\big)=1,\]
based on the uniform convergence of the Jacobi--Anger expansion formula (refer to the literature \cite{CK} for additional details)
\begin{equation}\label{Jacobi--Anger}
e^{ix\cos\vartheta}=J_0(x)+2\sum_{p=1}^{\infty}i^pJ_p(x)\cos(p\vartheta),
\end{equation}
we obtain
\[J_0(k|\mr-\mr'|)+2\sum_{p=1}^{\infty}i^pJ_p(k|\mr-\mr'|)\cos\big(p(\vartheta_m-\phi)\big)=e^{ik|\mr-\mr'|\cos(\vartheta_m-\phi)}=e^{ik\vv_m\cdot(\mr-\mr')}\]
and correspondingly, the factor $\Phi(\mr,\mr')$ of \eqref{Structure_Single} becomes
\begin{align*}
\Phi(\mr,\mr')&=\frac{k^2\#[\mathcal{I}(m)]}{2\sqrt{kQ\pi}}\int_D\mathcal{O}(\mr')G(\mrp_m,\mr')e^{ik\vv_m\cdot(\mr-\mr')}\rd\mr'.
\end{align*}
In addition, since $|e^{ik\vv_m\cdot(\mr-\mr')}|\equiv1$ for every $\mr\in\Omega$, the factor $\Phi(\mr,\mr')$ no longer contributes to identifying the objects $D$. Consequently, it is impossible to identify the objects through the map $\mathfrak{F}_{\dsm}(\mr,m,C)$ when $\alpha$ is close to $\pi$.
\end{Discussion}

\begin{Discussion}[Influence of Converted Constant]\label{discussion3}
Here, we assume that $0<\alpha<\pi$ and $\mr=\mr_s\in D_s$ for some $s$. Then, based on Discussion \ref{discussion1},
\[\Phi(\mr,\mr')=\frac{k^2\#[\mathcal{I}(m)]}{2\sqrt{kQ\pi}}\int_{D_s}\left(\frac{\eps_s-\epsb}{\epsb\mub}\right)G(\mrp_m,\mr')\rd\mr'-C(1+i)\#[\mathcal{J}(m)]\Big(J_0(k|\mr|)+\mathcal{E}_2(\mr,m)\Big).\]
Thus, if $C$ satisfies
\[|C|\ll\frac{k^2\#[\mathcal{I}(m)]}{2\#[\mathcal{J}(m)]\sqrt{2kQ\pi}}\left(\frac{\eps_s-\epsb}{\epsb\mub}\right)\frac{G(\mrp_m,\mr_s)\area(D_s)}{J_0(k|\mr_s|)+\mathcal{E}_2(\mr_s,m)},\]
then the factor $|\Psi(\mr)|$ is dominated by $|\Phi(\mr,\mr')|$. As a result, it is possible to recognize $D_s$ through the map of $\mathfrak{F}_{\dsm}(\mr,m,C)$. Otherwise, if $C$ satisfies the following relation for all $s=1,2,\ldots,S$:
\[|C|\gg\frac{k^2\#[\mathcal{I}(m)]}{2\#[\mathcal{J}(m)]\sqrt{2kQ\pi}}\left(\frac{\eps_s-\epsb}{\epsb\mub}\right)\frac{G(\mrp_m,\mr_s)\area(D_s)}{J_0(k|\mr_s|)+\mathcal{E}_2(\mr_s,m)},\]
then the factor $|\Phi(\mr,\mr')|$ is dominated by $|\Psi(\mr)|$. This means that the map of $\mathfrak{F}_{\dsm}(\mr,m,C)$ will only contain a peak of large magnitude at the origin. Thus, it will be very difficult to recognize the existence, location, and shape of the objects.

Based on the above observations, the selection of $C$ to identify $D_s$ is highly dependent on the $\eps_s$ values, the size of $D_s$, $k$, and the total number of transmitting and receiving antennas.
\end{Discussion}

\begin{Discussion}[Best Choice for the Constant]\label{discussion4}
Based on both Discussions \ref{discussion2} and \ref{discussion3}, $C=0$ and $\alpha\longrightarrow0$ will guarantee good imaging results. However, the setting $\alpha\longrightarrow0$ is impossible; thus, $C=0$ is the best choice for a proper application of the designed DSM, which is the theoretical rationale for converting unmeasurable data to zero in previous studies.
\end{Discussion}

\section{Indicator Function of DSM with Multiple Sources}\label{sec:4}
Further improvement of the indicator with single source is required because the imaging performance is strongly dependent on the location of the transmitter (refer to Discussion \ref{discussion1}) and the selection of the constant $C$, which is in turn highly dependent on the given problem at hand (e.g., the material properties of unknown objects; refer to Discussion \ref{discussion3}). Thus, in the following, we introduce an indicator function with multiple sources.

\subsection{Introduction to the Indicator Function}
For each transmitter $\mathcal{P}_m$, $m=1,2,\ldots,M$, we generate an arrangement such that
\[\mathbb{M}(C)=\begin{bmatrix}
\langle\mM(C,1),\mQ(\mr)\rangle_{\ell^2(\mathcal{Q})}\\
\langle\mM(C,2),\mQ(\mr)\rangle_{\ell^2(\mathcal{Q})}\\
\vdots\\
\langle\mS(C,M),\mQ(\mr)\rangle_{\ell^2(\mathcal{Q})}
\end{bmatrix}.\]
Here, if $C=0$, since $\#[\mathcal{I}(m)]$ are identical, then based on the structure \eqref{Structure_InnerProduct},
\[\langle\mM(0,m),\mQ(\mr)\rangle_{\ell^2(\mathcal{Q})}=\frac{ik^2\#[\mathcal{I}(m)]}{8kQ\pi}\int_D\mathcal{O}(\mr')G(\mrp_m,\mr')\Big(J_0(k|\mr-\mr'|)+\mathcal{E}_1(\mr,\mr',m)\Big)\rd\mr',\]
the arrangement $\mM(C)$ can be written as
\[\mathbb{M}(C)=\frac{ik^2\#[\mathcal{I}(m)]}{8kQ\pi}\begin{bmatrix}
\smallskip\displaystyle\int_D\mathcal{O}(\mr')G(\mrp_1,\mr')\Big(J_0(k|\mr-\mr'|)+\mathcal{E}_1(\mr,\mr',m)\Big)\rd\mr'\\
\displaystyle\int_D\mathcal{O}(\mr')G(\mrp_2,\mr')\Big(J_0(k|\mr-\mr'|)+\mathcal{E}_1(\mr,\mr',m)\Big)\rd\mr'\\
\vdots\\
\displaystyle\int_D\mathcal{O}(\mr')G(\mrp_M,\mr')\Big(J_0(k|\mr-\mr'|)+\mathcal{E}_1(\mr,\mr',m)\Big)\rd\mr'
\end{bmatrix}.\]
Based on the above, the factor $G(\mrp_m,\mr')J_0(k|\mr-\mr'|)$ contains information of $\mr'\in D$; therefore, similar to the development of the indicator function with a single source, we generate a test vector corresponding to the transmitters. Here, for each search point $\mr\in\Omega$,
\begin{equation}\label{TestVector_Transmitter}
\mathbb{P}(\mr)=\begin{bmatrix}
G(\mrp_1,\mr)\\
G(\mrp_2,\mr)\\
\vdots\\
G(\mrp_M,\mr)
\end{bmatrix}.
\end{equation}
Then, to test the orthogonality property of the Hilbert space $\ell^2$, we define
\[\langle\mM(C),\mP(\mr)\rangle_{\ell^2(\mathcal{P})}:=\mM(C)\cdot\overline{\mP(\mr)}\qand\|\cdot\|_{\ell^2(\mathcal{P})}=\langle\cdot,\cdot\rangle_{\ell^2(\mathcal{P})}^{1/2},\]
and the following indicator function of the DSM with multiple sources can be introduced. Here, for each $\mr\in\Omega$ and fixed constant $C$, we obtain
\begin{equation}\label{IndicatorFunction_Multiple}
\mathfrak{F}_{\msm}(\mr,C)=\frac{|\langle\mM(C),\mP(\mr)\rangle_{\ell^2(\mathcal{P})}|}{\|\mM(C)\|_{\ell^2(\mathcal{P})}\|\mP(\mr)\|_{\ell^2(\mathcal{P})}}.
\end{equation}

\subsection{Mathematical Structure of the Indicator Function}
Based on numerical simulation results discussed in Section \ref{sec:5}, we can say that $\mathfrak{F}_{\msm}(\mr,C)$ with $C=0$ improves the imaging performance. To support this fact theoretically, we investigate the mathematical structure of the $\mathfrak{F}_{\msm}(\mr,C)$ as follows.

\begin{Theorem}\label{Theorem_Multiple}
Here, we assume that $4k|\mr-\mr_m|\gg1$ and $4k|\mr-\mq_n|\gg1$ for all $\mr\in\Omega$, $m=1,2,\ldots,M$, and $n=1,2,\ldots,N$. Then, $\mathfrak{F}_{\msm}(\mr,C)$ can be expressed as follows:
\begin{equation}\label{Structure_Multiple}
\mathfrak{F}_{\msm}(\mr,C)\approx\frac{|\Lambda(\mr,\mr')+\Gamma(\mr)|}{\displaystyle\max_{\mr\in\Omega}|\Lambda(\mr,\mr')+\Gamma(\mr)|},
\end{equation}
where
\[\Lambda(\mr,\mr')=\frac{\#[\mathcal{I}(m)]}{8\sqrt{PQ}\pi}\int_D\mathcal{O}(\mr')\bigg(J_0(k|\mr-\mr'|)^2+2\sum_{p=1}^{\infty}(-1)^pJ_p(k|\mr-\mr'|)^2\sinc\big(p(\pi-\alpha)\big)\bigg)\rd\mr'\]
and
\[\Gamma(\mr)=\frac{C\#[\mathcal{J}(m)]}{k}\left(J_0(k|\mr|)^2+2\sum_{q=1}^{\infty}(-1)^qJ_q(k|\mr|)^2\sinc(q\alpha)\right).\]
\end{Theorem}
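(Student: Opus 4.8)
The plan is to follow the architecture of the proof of Theorem~\ref{Theorem_Single}, feeding the already-established single-source identity \eqref{Structure_InnerProduct} into one extra average over the transmitters. First I would write the numerator of $\mathfrak{F}_{\msm}$ explicitly: by the definitions of $\mM(C)$, $\mP(\mr)$, and the $\ell^2(\mathcal{P})$ pairing,
\[
\langle\mM(C),\mP(\mr)\rangle_{\ell^2(\mathcal{P})}=\sum_{m=1}^{M}\langle\mS(C,m),\mQ(\mr)\rangle_{\ell^2(\mathcal{Q})}\,\overline{G(\mrp_m,\mr)},
\]
and substitute \eqref{Structure_InnerProduct} for the inner factor (which already used $4k|\mr-\mq_n|\gg1$). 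Since now also $4k|\mr-\mrp_m|\gg1$, the asymptotic form \eqref{Asymptotic_Hankel} applies to $G(\mr',\mrp_m)$ inside the $D$-integral and to $\overline{G(\mrp_m,\mr)}$; their product collapses to $\tfrac{1}{8kP\pi}e^{ik\vv_m\cdot(\mr-\mr')}$ up to a fixed ($\mr$-independent) phase, while the $C$-term acquires the single exponential $e^{ik\vv_m\cdot\mr}$. Assuming, as in Section~\ref{sec:4}, that $\#[\mathcal{I}(m)]$ and $\#[\mathcal{J}(m)]$ do not depend on $m$, these counts and all $\mr$-independent prefactors come out of the sum, leaving an ``object'' sum over $m$ weighted by $e^{ik\vv_m\cdot(\mr-\mr')}\big(J_0(k|\mr-\mr'|)+\mathcal{E}_1(\mr,\mr',m)\big)$ and a ``constant'' sum over $m$ weighted by $e^{ik\vv_m\cdot\mr}\big(J_0(k|\mr|)+\mathcal{E}_2(\mr,m)\big)$.

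Next I would evaluate the object sum. Writing $\mr-\mr'=|\mr-\mr'|(\cos\phi,\sin\phi)$, I replace $\sum_{m=1}^{M}(\cdot)$ by $\tfrac{M}{2\pi}\int_0^{2\pi}(\cdot)\,\rd\vartheta$ over the full transmitter aperture (the Riemann-sum step behind \eqref{Jacobi--Anger}), and then expand $e^{ik|\mr-\mr'|\cos(\vartheta-\phi)}$ by the Jacobi--Anger formula \eqref{Jacobi--Anger}. The integrand becomes the product of two cosine series in $\vartheta-\phi$: one from Jacobi--Anger and one from $J_0+\mathcal{E}_1$. Integrating term by term over $[0,2\pi]$ and invoking $\int_0^{2\pi}\cos(p(\vartheta-\phi))\cos(q(\vartheta-\phi))\,\rd\vartheta=\pi\delta_{pq}$ for $p,q\ge1$ (every product against the constant term vanishing) kills all off-diagonal contributions; the surviving diagonal terms sum to $J_0(k|\mr-\mr'|)^2+2\sum_{p\ge1}(i^p)^2J_p(k|\mr-\mr'|)^2\sinc\big(p(\pi-\alpha)\big)$, and since $(i^p)^2=(-1)^p$ this is exactly the bracket defining $\Lambda(\mr,\mr')$. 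Running the identical computation on the constant sum, now with $\mr=|\mr|(\cos\psi,\sin\psi)$, the exponential $e^{ik|\mr|\cos(\vartheta-\psi)}$, and the series $\mathcal{E}_2$, yields $J_0(k|\mr|)^2+2\sum_{q\ge1}(-1)^qJ_q(k|\mr|)^2\sinc(q\alpha)$, i.e.\ the bracket of $\Gamma(\mr)$.

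Collecting the two pieces, $\langle\mM(C),\mP(\mr)\rangle_{\ell^2(\mathcal{P})}$ equals a fixed constant times $\Lambda(\mr,\mr')+\Gamma(\mr)$, where---exactly as in Theorem~\ref{Theorem_Single}---the $\mr$-independent multiplicative and phase factors produced by the asymptotics and the Riemann sums are absorbed into the normalizations chosen for $\Lambda$ and $\Gamma$. The stated form \eqref{Structure_Multiple} then follows as in the single-source case: by H{\"o}lder's inequality $|\langle\mM(C),\mP(\mr)\rangle_{\ell^2(\mathcal{P})}|\le\|\mM(C)\|_{\ell^2(\mathcal{P})}\|\mP(\mr)\|_{\ell^2(\mathcal{P})}$, so that $\mathfrak{F}_{\msm}(\mr,C)\in[0,1]$, and normalizing by its maximum over $\Omega$ gives the displayed ratio with $\max_{\mr\in\Omega}|\Lambda(\mr,\mr')+\Gamma(\mr)|$ in the denominator.

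The step I expect to be the main obstacle is the legitimacy of the double series manipulation in the second paragraph: expanding the plane-wave factor, multiplying it against the disturbance series $\mathcal{E}_1$ (or $\mathcal{E}_2$), and integrating term by term over $[0,2\pi]$. This must be justified from the uniform convergence of the Jacobi--Anger expansion (already relied on for \eqref{Jacobi--Anger}) together with $|\sinc(p(\pi-\alpha))|\le1$ and the rapid decay of $J_p(x)$ in the order $p$, which between them supply a dominating series and make Fubini applicable. A secondary, bookkeeping-level point is the standing assumption that $\#[\mathcal{I}(m)]$ and $\#[\mathcal{J}(m)]$ are genuinely $m$-independent---true only up to the $O(1)$ rounding of the discrete receiver grid---which should be flagged explicitly, together with the validity of the replacement $\sum_m\approx\tfrac{M}{2\pi}\int$ for large $M$.
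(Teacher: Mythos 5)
Your proposal is correct and follows essentially the same route as the paper's proof: expand $\langle\mM(C),\mP(\mr)\rangle_{\ell^2(\mathcal{P})}$ using \eqref{Structure_InnerProduct} and the asymptotic form \eqref{Asymptotic_Hankel}, replace the transmitter sums by integrals over $[0,2\pi]$, apply the Jacobi--Anger expansion with cosine orthogonality to obtain the $(-1)^pJ_p^2$ diagonal terms (the paper's computations \eqref{Term1}--\eqref{Term4}), and conclude with H{\"o}lder's inequality. Your added remarks on justifying the term-by-term integration and on the $m$-independence of $\#[\mathcal{I}(m)]$, $\#[\mathcal{J}(m)]$ only make explicit steps the paper treats implicitly.
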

\begin{proof}
Based on \eqref{Asymptotic_Hankel}, \eqref{Structure_InnerProduct}, and \eqref{TestVector_Transmitter}, we obtain
\begin{align}
\begin{aligned}\label{Term}
\langle\mM&(C),\mP(\mr)\rangle_{\ell^2(\mathcal{P})}\\
=&-\sum_{m=1}^{M}\frac{\#[\mathcal{I}(m)]}{64PQ\pi^2}\int_D\mathcal{O}(\mr')e^{ik\vv_m\cdot(\mr-\mr')}\Big(J_0(k|\mr-\mr'|)+\mathcal{E}_1(\mr,\mr',m)\Big)\rd\mr'\\
&-\sum_{m=1}^{M}\frac{iC\#[\mathcal{J}(m)]e^{-ik(P+Q)}}{8k\pi\sqrt{PQ}}e^{ik\vv_m\cdot\mr}\Big(J_0(k|\mr|)+\mathcal{E}_2(\mr,m)\Big).
\end{aligned}
\end{align}
First, by applying \eqref{Jacobi--Anger}, we immediately obtain
\begin{equation}\label{Term1}
\sum_{m=1}^{M}e^{ik\vv_m\cdot(\mr-\mr')}J_0(k|\mr-\mr'|)\approx MJ_0(k|\mr-\mr'|)^2.
\end{equation}
Next, by letting $\mr-\mr'=|\mr-\mr'|(\cos\phi,\sin\phi)$ and $\mr=|\mr|(\cos\psi,\sin\psi)$, based on the \eqref{Jacobi--Anger}, we derive
\begin{align}
\begin{aligned}\label{Term2}
&\sum_{m=1}^{M}e^{ik\vv_m\cdot(\mr-\mr')}\cos\big(q(\vartheta_m-\phi)\big)\approx\frac{M}{2\pi}\int_0^{2\pi}e^{ik|\mr-\mr'|\cos(\varphi-\phi)}\cos\big(q(\vartheta-\psi)\big)\rd\vartheta\\
&=\frac{M}{2\pi}\int_0^{2\pi}\left(J_0(k|\mr-\mr'|)+2\sum_{p=1}^{\infty}i^pJ_p(k|\mr-\mr'|)\cos\big(p(\vartheta-\phi)\big)\right)\cos\big(q(\vartheta-\phi)\big)\rd\vartheta\\
&=Mi^qJ_q(k|\mr-\mr'|)
\end{aligned}
\end{align}
and
\begin{equation}\label{Term3}
\sum_{m=1}^{M}e^{ik\vv_m\cdot\mr}J_0(k|\mr|)\approx MJ_0(k|\mr|)^2.
\end{equation}
Similar to the derivation in \eqref{Term2}, we can examine
\begin{equation}\label{Term4}
\sum_{m=1}^{M}e^{ik\vv_m\cdot\mr}\cos\big(q(\vartheta_m-\psi)\big)\approx Mi^qJ_q(k|\mr|).
\end{equation}
Thus, plugging \eqref{Term1}--\eqref{Term4} into \eqref{Term} yields
\begin{align*}
\langle\mM&(C),\mP(\mr)\rangle_{\ell^2(\mathcal{P})}\\
=&-\frac{M\#[\mathcal{I}(m)]}{64PQ\pi^2}\int_D\mathcal{O}(\mr')\left(J_0(k|\mr-\mr'|)^2+2\sum_{p=1}^{\infty}(-1)^pJ_p(k|\mr-\mr'|)^2\sinc\big(p(\pi-\alpha)\big)\right)\rd\mr'\\
&-\frac{iCM\#[\mathcal{J}(m)]e^{-ik(P+Q)}}{8k\pi\sqrt{PQ}}\left(J_0(k|\mr|)^2+2\sum_{q=1}^{\infty}(-1)^qJ_q(k|\mr|)^2\sinc(q\alpha)\right).
\end{align*}
Finally, by applying H{\"o}lder's inequality, we obtain \eqref{Structure_Multiple}.
\end{proof}

\subsection{Properties of Indicator Function With Multiple Sources}
Following the result in Theorem \ref{Theorem_Multiple}, in the following, we discuss some properties of the indicator function with multiple sources.

\begin{Discussion}[Composition of Indicator Function]\label{discussion5}
Similar to the indicator function with a single source, the factor $J_0(k|\mr-\mr'|)^2$ contributes to the object detection while the remaining factors disturb the object detection and generate several artifacts. In contrast, the imaging performance of the $\mathfrak{F}_{\msm}(\mr,C)$ is independent of the transmitter’s location. To compare the imaging performance between single and multiple sources, we consider the following:
\begin{align*}
f_1(x)&=J_0(k|x|)+2\sum_{p=1}^{10^5}i^pJ_p(k|x|)\sinc\left(\frac{p\pi}{2}\right)\\
f_2(x)&=J_0(k|x|)^2+2\sum_{p=1}^{10^5}(-1)^pJ_p(k|x|)^2\sinc\left(\frac{p\pi}{2}\right).
\end{align*}
Note that these are similar to the 1D version of $\mathfrak{F}_{\dsm}(\mr,m,0)$ and $\mathfrak{F}_{\msm}(\mr,0)$ with $\alpha=\SI{90}{\degree}$ in the presence of a single object located at the origin. By comparing the plots, we can say that the application of multiple sources will yield better images because less oscillation is involved than when imaging is performed with only a single source.
\end{Discussion}

\begin{figure}[h]
\begin{center}
\begin{tikzpicture}[scale=1]
\scriptsize
\begin{axis}
[width=\textwidth,
height=0.42\textwidth,
xmin=-0.1,
xmax=0.1,
ymin=0,
ymax=1,
xtick distance=0.05,
legend cell align={left}]
\addplot[line width=1.2pt,solid,color=green!60!black] %
	table[x=x,y=y1,col sep=comma]{PlotSeries90.csv};
\addlegendentry{\scriptsize$f_1(x)$};
\addplot[line width=1.2pt,solid,color=red] %
	table[x=x,y=y2,col sep=comma]{PlotSeries90.csv};
\addlegendentry{\scriptsize$f_2(x)$};
\end{axis}
\end{tikzpicture}
\caption{\label{PlotSeries}Plots of $f_1(x)$ and $f_2(x)$ at $f=\SI{4}{\giga\hertz}$.}
\end{center}
\end{figure}

\begin{Discussion}[Influence of Bistatic Angle]\label{discussion6}
Based on the identified structure \eqref{Structure_Multiple}, we observe that the imaging performance of $\mathfrak{F}_{\msm}(\mr,C)$ is highly dependent on $\alpha$. Similar to imaging with a single source, a good imaging result is expected when $\alpha$ is small; however, it will be impossible to identify the objects when $\alpha$ is close to $\pi$.
\end{Discussion}

\begin{Discussion}[Influence of Converted Constant]\label{discussion7}
Here, we assume that $\mr=\mr_s\in D_s$ for some $s$. Then, based on \eqref{Structure_Multiple},
\begin{multline*}
\Lambda(\mr_s,\mr')+\Gamma(\mr_s)\\
=\frac{\#[\mathcal{I}(m)]}{8\sqrt{PQ}\pi}\int_D\mathcal{O}(\mr')\rd\mr'+\frac{C\#[\mathcal{J}(m)]}{k}\bigg(J_0(k|\mr_s|)^2+2\sum_{q=1}^{\infty}(-1)^qJ_q(k|\mr_s|)^2\sinc(q\alpha)\bigg).
\end{multline*}
Thus, if $C$ satisfies
\[|C|\ll\left|\frac{k\area(D_s)\#[\mathcal{I}(m)]}{8\pi\#[\mathcal{J}(m)]\sqrt{PQ}}\left(\frac{\eps_s-\epsb}{\epsb\mub}\right)\bigg(J_0(k|\mr_s|)^2+2\sum_{q=1}^{\infty}(-1)^qJ_q(k|\mr_s|)^2\sinc(q\alpha)\bigg)^{-1}\right|,\]
then it will be possible to recognize $D_s$ through the map of $\mathfrak{F}_{\msm}(\mr,C)$. Otherwise, if $C$ satisfies the following inequality for all $s=1,2,\ldots,S$:
\[|C|\gg\left|\frac{k\area(D_s)\#[\mathcal{I}(m)]}{8\pi\#[\mathcal{J}(m)]\sqrt{PQ}}\left(\frac{\eps_s-\epsb}{\epsb\mub}\right)\bigg(J_0(k|\mr_s|)^2+2\sum_{q=1}^{\infty}(-q)^1J_q(k|\mr_s|)^2\sinc(q\alpha)\bigg)^{-1}\right|,\]
then it will be very difficult to recognize the existence, location, and shape of the objects because the map of $\mathfrak{F}_{\msm}(\mr,C)$ will only contain a peak of large magnitude at the origin. 

Based on the above observations, similar to the imaging with a single source, the selection of $C$ to identify $D_s$ is highly dependent on the $\eps_s$ values, the size of $D_s$, $k$, and the total number of transmitting and receiving antennas. Thus, $C=0$ is the best choice for a proper application of $\mathfrak{F}_{\msm}(\mr,C)$.
\end{Discussion}

\section{Results of Numerical Simulations}\label{sec:5}
Here, the results of numerical simulations obtained on the Fresnel dataset \cite{BS} with $f=\SI{4}{\giga\hertz}$ are presented and analyzed to support the theoretical results and elucidate the discovered properties of the DSM. This dataset contains the scattered field data in the presence of two circular objects $D_s$, $s=1,2$ with centers $\mr_1=(-\SI{0.045}{\meter},0)$, $\mr_2=(\SI{0.045}{\meter},\SI{0.010}{\meter})$, the same radii $\alpha_s=\SI{0.015}{\meter}$, and the permittivity values ($\eps_s=(3\pm0.3)\eps_0$). In addition, the transmitters and receivers are positioned on the circles centered at the origin with radii $|P|=\SI{0.72}{\meter}$ and $|Q|=\SI{0.76}{\meter}$, respectively. The range of receivers is restricted from $\alpha$ to $2\pi-\alpha$ with a step size of $\triangle\theta=\SI{5}{\degree}$ based on each direction of the transmitters $\vartheta_m$, and the transmitters are distributed evenly with step sizes of $\SI{10}{\degree}$ from $\SI{0}{\degree}$ to $\triangle\vartheta=\SI{350}{\degree}$. With this setting, the imaging results $\mathfrak{F}_{\dsm}(\mr,m,C)$ with sources $m=1$, $16$, and $31$ (Figure \ref{SimulationSetting} shows the antenna arrangements), and $\mathfrak{F}_{\msm}(\mr,C)$ with every source $m=1,2,\ldots,36$ were produced for each $\mr\in\Omega$, where the imaging region $\Omega$ was selected as the square $[-\SI{0.1}{\meter},\SI{0.1}{\meter}]\times[-\SI{0.1}{\meter},\SI{0.1}{\meter}]$.

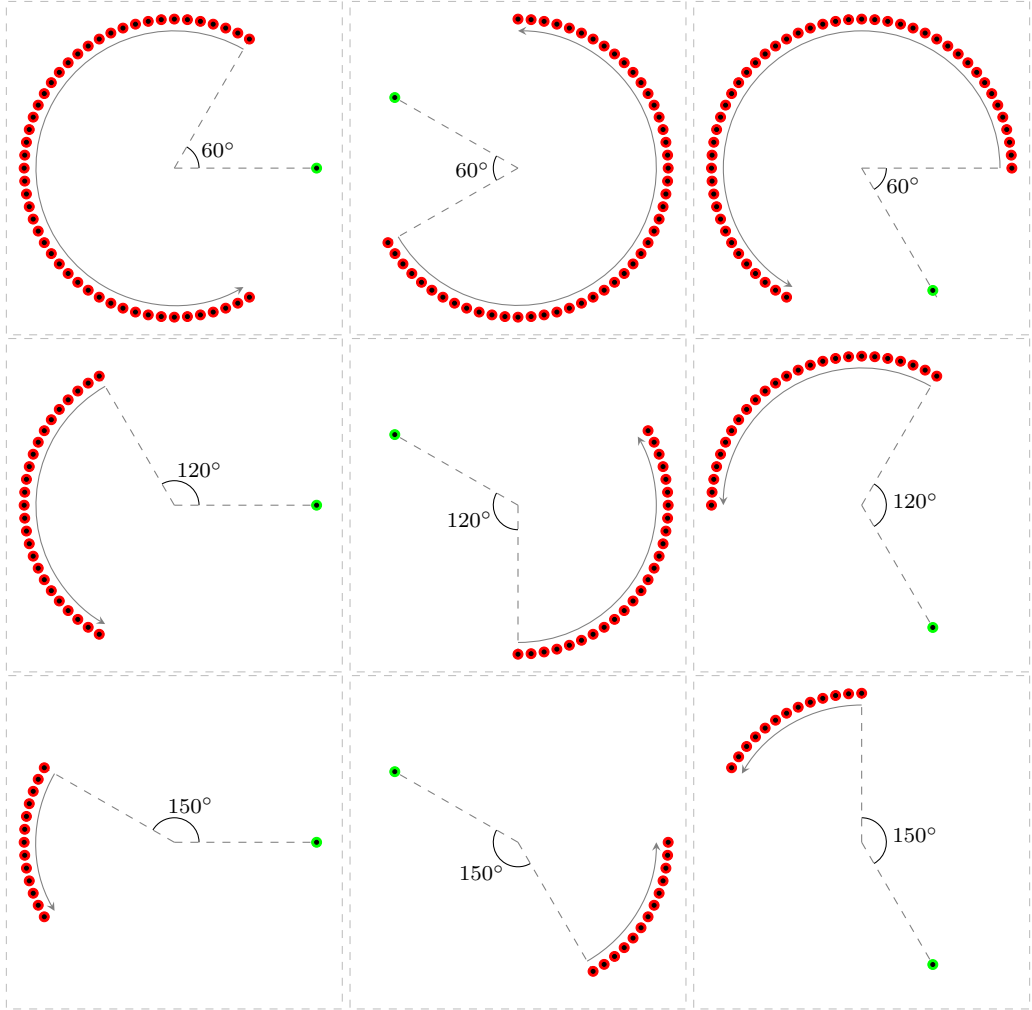
\begin{figure}[h]
\begin{center}
\begin{tikzpicture}[scale=1.3]
% Antenna drawing
\def\RT{0.72*2};
\def\RR{0.76*2};
\def\AL{0.5};
\def\Edge{0.15*2};
\def\BD{0.85*2};

\draw[gray,dashed,-] (0,0) -- (\RT,0);
\draw[gray,dashed,-] (0,0) -- ({\RR*cos(60)},{\RR*sin(60)});

\draw[black,solid,-] (0.25,0) arc (0:60:0.25) node[right,black,xshift=2,yshift=-1] {\footnotesize$\SI{60}{\degree}$};

\draw[green,fill=green] ({\RT*cos(0)},{\RT*sin(0)}) circle (0.05cm);
\draw[black,fill=black] ({\RT*cos(0)},{\RT*sin(0)}) circle (0.02cm);
\foreach \beta in {60,65,...,300}
{\draw[red,fill=red] ({\RR*cos(\beta)},{\RR*sin(\beta)}) circle (0.05cm);
\draw[black,fill=black] ({\RR*cos(\beta)},{\RR*sin(\beta)}) circle (0.02cm);}

\draw[gray,solid,-stealth] ({1.4*cos(60)},{1.4*sin(60)}) arc (60:300:1.4);

%% Antennas Legend
%\draw[green,fill=green] (-0.9,0.15) circle (0.05cm);
%\draw[black,fill=black] (-0.9,0.15) circle (0.02cm);
%\node[right] at (-0.9,0.15) {\texttt{~transmitter}};
%\draw[red,fill=red] (-0.9,-0.15) circle (0.05cm);
%\draw[black,fill=black] (-0.9,-0.15) circle (0.02cm);
%\node[right] at (-0.9,-0.15) {\texttt{~receivers}};

% Boundary box
\draw[gray!50!white,dashed] (\BD,\BD) -- (\BD,-\BD) -- (-\BD,-\BD) -- (-\BD,\BD) -- cycle;
\end{tikzpicture}
\begin{tikzpicture}[scale=1.3]

% Antenna drawing
\def\RT{0.72*2};
\def\RR{0.76*2};
\def\Edge{0.15*2};
\def\BD{0.85*2};

\draw[gray,dashed,-] (0,0) -- ({\RT*cos(150)},{\RT*sin(150)});
\draw[gray,dashed,-] (0,0) -- ({\RR*cos(210)},{\RR*sin(210)});

\draw[black,solid,-] ({0.25*cos(150)},{0.25*sin(150)}) arc (150:210:0.25) node[left,black,xshift=1,yshift=4] {\footnotesize$\SI{60}{\degree}$};

\draw[green,fill=green] ({\RT*cos(150)},{\RT*sin(150)}) circle (0.05cm);
\draw[black,fill=black] ({\RT*cos(150)},{\RT*sin(150)}) circle (0.02cm);

\foreach \beta in {210,215,...,450}
{\draw[red,fill=red] ({\RR*cos(\beta)},{\RR*sin(\beta)}) circle (0.05cm);
\draw[black,fill=black] ({\RR*cos(\beta)},{\RR*sin(\beta)}) circle (0.02cm);}

\draw[gray,solid,-stealth] ({1.4*cos(210)},{1.4*sin(210)}) arc (210:450:1.4);

%% Antennas Legend
%\draw[green,fill=green] (-0.9,0.15) circle (0.05cm);
%\draw[black,fill=black] (-0.9,0.15) circle (0.02cm);
%\node[right] at (-0.9,0.15) {\texttt{~transmitter}};
%\draw[red,fill=red] (-0.9,-0.15) circle (0.05cm);
%\draw[black,fill=black] (-0.9,-0.15) circle (0.02cm);
%\node[right] at (-0.9,-0.15) {\texttt{~receivers}};

% Boundary box
\draw[gray!50!white,dashed] (\BD,\BD) -- (\BD,-\BD) -- (-\BD,-\BD) -- (-\BD,\BD) -- cycle;
\end{tikzpicture}
\begin{tikzpicture}[scale=1.3]

% Antenna drawing
\def\RT{0.72*2};
\def\RR{0.76*2};
\def\Edge{0.15*2};
\def\BD{0.85*2};

\draw[gray,dashed,-] (0,0) -- ({\RT*cos(0)},{\RT*sin(0)});
\draw[gray,dashed,-] (0,0) -- ({\RR*cos(300)},{\RR*sin(300)});

\draw[black,solid,-] ({0.25*cos(300)},{0.25*sin(300)}) arc (300:360:0.25) node[below right,black,xshift=-3.5] {\footnotesize$\SI{60}{\degree}$};

\draw[green,fill=green] ({\RT*cos(300)},{\RT*sin(300)}) circle (0.05cm);
\draw[black,fill=black] ({\RT*cos(300)},{\RT*sin(300)}) circle (0.02cm);

\foreach \beta in {0,5,...,240}
{\draw[red,fill=red] ({\RR*cos(\beta)},{\RR*sin(\beta)}) circle (0.05cm);
\draw[black,fill=black] ({\RR*cos(\beta)},{\RR*sin(\beta)}) circle (0.02cm);}

\draw[gray,solid,-stealth] ({1.4*cos(0)},{1.4*sin(0)}) arc (0:240:1.4);

%% Antennas Legend
%\draw[green,fill=green] (-0.9,0.15) circle (0.05cm);
%\draw[black,fill=black] (-0.9,0.15) circle (0.02cm);
%\node[right] at (-0.9,0.15) {\texttt{~transmitter}};
%\draw[red,fill=red] (-0.9,-0.15) circle (0.05cm);
%\draw[black,fill=black] (-0.9,-0.15) circle (0.02cm);
%\node[right] at (-0.9,-0.15) {\texttt{~receivers}};

% Boundary box
\draw[gray!50!white,dashed] (\BD,\BD) -- (\BD,-\BD) -- (-\BD,-\BD) -- (-\BD,\BD) -- cycle;
\end{tikzpicture}\\
\begin{tikzpicture}[scale=1.3]
% Antenna drawing
\def\RT{0.72*2};
\def\RR{0.76*2};
\def\Edge{0.15*2};
\def\BD{0.85*2};

\draw[gray,dashed,-] (0,0) -- (\RT,0);
\draw[gray,dashed,-] (0,0) -- ({\RR*cos(120)},{\RR*sin(120)});

\draw[black,solid,-] (0.25,0) arc (0:120:0.25) node[above right,black,xshift=2,yshift=-1] {\footnotesize$\SI{120}{\degree}$};

\draw[green,fill=green] ({\RT*cos(0)},{\RT*sin(0)}) circle (0.05cm);
\draw[black,fill=black] ({\RT*cos(0)},{\RT*sin(0)}) circle (0.02cm);
\foreach \beta in {120,125,...,240}
{\draw[red,fill=red] ({\RR*cos(\beta)},{\RR*sin(\beta)}) circle (0.05cm);
\draw[black,fill=black] ({\RR*cos(\beta)},{\RR*sin(\beta)}) circle (0.02cm);}

\draw[gray,solid,-stealth] ({1.4*cos(120)},{1.4*sin(120)}) arc (120:240:1.4);

%% Antennas Legend
%\draw[green,fill=green] (-0.9,0.15) circle (0.05cm);
%\draw[black,fill=black] (-0.9,0.15) circle (0.02cm);
%\node[right] at (-0.9,0.15) {\texttt{~transmitter}};
%\draw[red,fill=red] (-0.9,-0.15) circle (0.05cm);
%\draw[black,fill=black] (-0.9,-0.15) circle (0.02cm);
%\node[right] at (-0.9,-0.15) {\texttt{~receivers}};

% Boundary box
\draw[gray!50!white,dashed] (\BD,\BD) -- (\BD,-\BD) -- (-\BD,-\BD) -- (-\BD,\BD) -- cycle;
\end{tikzpicture}
\begin{tikzpicture}[scale=1.3]

% Antenna drawing
\def\RT{0.72*2};
\def\RR{0.76*2};
\def\Edge{0.15*2};
\def\BD{0.85*2};

\draw[gray,dashed,-] (0,0) -- ({\RT*cos(150)},{\RT*sin(150)});
\draw[gray,dashed,-] (0,0) -- ({\RR*cos(270)},{\RR*sin(270)});

\draw[black,solid,-] ({0.25*cos(150)},{0.25*sin(150)}) arc (150:270:0.25) node[left,black,xshift=-6,yshift=4] {\footnotesize$\SI{120}{\degree}$};

\draw[green,fill=green] ({\RT*cos(150)},{\RT*sin(150)}) circle (0.05cm);
\draw[black,fill=black] ({\RT*cos(150)},{\RT*sin(150)}) circle (0.02cm);

\foreach \beta in {270,275,...,390}
{\draw[red,fill=red] ({\RR*cos(\beta)},{\RR*sin(\beta)}) circle (0.05cm);
\draw[black,fill=black] ({\RR*cos(\beta)},{\RR*sin(\beta)}) circle (0.02cm);}

\draw[gray,solid,-stealth] ({1.4*cos(270)},{1.4*sin(270)}) arc (270:390:1.4);

%% Antennas Legend
%\draw[green,fill=green] (-0.9,0.15) circle (0.05cm);
%\draw[black,fill=black] (-0.9,0.15) circle (0.02cm);
%\node[right] at (-0.9,0.15) {\texttt{~transmitter}};
%\draw[red,fill=red] (-0.9,-0.15) circle (0.05cm);
%\draw[black,fill=black] (-0.9,-0.15) circle (0.02cm);
%\node[right] at (-0.9,-0.15) {\texttt{~receivers}};

% Boundary box
\draw[gray!50!white,dashed] (\BD,\BD) -- (\BD,-\BD) -- (-\BD,-\BD) -- (-\BD,\BD) -- cycle;
\end{tikzpicture}
\begin{tikzpicture}[scale=1.3]

% Antenna drawing
\def\RT{0.72*2};
\def\RR{0.76*2};
\def\Edge{0.15*2};
\def\BD{0.85*2};

\draw[gray,dashed,-] (0,0) -- ({\RT*cos(300)},{\RT*sin(300)});
\draw[gray,dashed,-] (0,0) -- ({\RR*cos(60)},{\RR*sin(60)});

\draw[black,solid,-] ({0.25*cos(300)},{0.25*sin(300)}) arc (300:420:0.25) node[below right,black,xshift=3.5] {\footnotesize$\SI{120}{\degree}$};

\draw[green,fill=green] ({\RT*cos(300)},{\RT*sin(300)}) circle (0.05cm);
\draw[black,fill=black] ({\RT*cos(300)},{\RT*sin(300)}) circle (0.02cm);

\foreach \beta in {60,65,...,180}
{\draw[red,fill=red] ({\RR*cos(\beta)},{\RR*sin(\beta)}) circle (0.05cm);
\draw[black,fill=black] ({\RR*cos(\beta)},{\RR*sin(\beta)}) circle (0.02cm);}

\draw[gray,solid,-stealth] ({1.4*cos(60)},{1.4*sin(60)}) arc (60:180:1.4);

%% Antennas Legend
%\draw[green,fill=green] (-0.9,0.15) circle (0.05cm);
%\draw[black,fill=black] (-0.9,0.15) circle (0.02cm);
%\node[right] at (-0.9,0.15) {\texttt{~transmitter}};
%\draw[red,fill=red] (-0.9,-0.15) circle (0.05cm);
%\draw[black,fill=black] (-0.9,-0.15) circle (0.02cm);
%\node[right] at (-0.9,-0.15) {\texttt{~receivers}};

% Boundary box
\draw[gray!50!white,dashed] (\BD,\BD) -- (\BD,-\BD) -- (-\BD,-\BD) -- (-\BD,\BD) -- cycle;
\end{tikzpicture}\\
\begin{tikzpicture}[scale=1.3]
% Antenna drawing
\def\RT{0.72*2};
\def\RR{0.76*2};
\def\Edge{0.15*2};
\def\BD{0.85*2};

\draw[gray,dashed,-] (0,0) -- (\RT,0);
\draw[gray,dashed,-] (0,0) -- ({\RR*cos(150)},{\RR*sin(150)});

\draw[black,solid,-] (0.25,0) arc (0:150:0.25) node[above right,black,xshift=2,yshift=3] {\footnotesize$\SI{150}{\degree}$};

\draw[green,fill=green] ({\RT*cos(0)},{\RT*sin(0)}) circle (0.05cm);
\draw[black,fill=black] ({\RT*cos(0)},{\RT*sin(0)}) circle (0.02cm);
\foreach \beta in {150,155,...,210}
{\draw[red,fill=red] ({\RR*cos(\beta)},{\RR*sin(\beta)}) circle (0.05cm);
\draw[black,fill=black] ({\RR*cos(\beta)},{\RR*sin(\beta)}) circle (0.02cm);}

\draw[gray,solid,-stealth] ({1.4*cos(150)},{1.4*sin(150)}) arc (150:210:1.4);

%% Antennas Legend
%\draw[green,fill=green] (-0.9,0.15) circle (0.05cm);
%\draw[black,fill=black] (-0.9,0.15) circle (0.02cm);
%\node[right] at (-0.9,0.15) {\texttt{~transmitter}};
%\draw[red,fill=red] (-0.9,-0.15) circle (0.05cm);
%\draw[black,fill=black] (-0.9,-0.15) circle (0.02cm);
%\node[right] at (-0.9,-0.15) {\texttt{~receivers}};

% Boundary box
\draw[gray!50!white,dashed] (\BD,\BD) -- (\BD,-\BD) -- (-\BD,-\BD) -- (-\BD,\BD) -- cycle;
\end{tikzpicture}
\begin{tikzpicture}[scale=1.3]

% Antenna drawing
\def\RT{0.72*2};
\def\RR{0.76*2};
\def\Edge{0.15*2};
\def\BD{0.85*2};

\draw[gray,dashed,-] (0,0) -- ({\RT*cos(150)},{\RT*sin(150)});
\draw[gray,dashed,-] (0,0) -- ({\RR*cos(300)},{\RR*sin(300)});

\draw[black,solid,-] ({0.25*cos(150)},{0.25*sin(150)}) arc (150:300:0.25) node[below left,black,xshift=-6,yshift=3] {\footnotesize$\SI{150}{\degree}$};

\draw[green,fill=green] ({\RT*cos(150)},{\RT*sin(150)}) circle (0.05cm);
\draw[black,fill=black] ({\RT*cos(150)},{\RT*sin(150)}) circle (0.02cm);

\foreach \beta in {300,305,...,360}
{\draw[red,fill=red] ({\RR*cos(\beta)},{\RR*sin(\beta)}) circle (0.05cm);
\draw[black,fill=black] ({\RR*cos(\beta)},{\RR*sin(\beta)}) circle (0.02cm);}

\draw[gray,solid,-stealth] ({1.4*cos(300)},{1.4*sin(300)}) arc (300:360:1.4);

%% Antennas Legend
%\draw[green,fill=green] (-0.9,0.15) circle (0.05cm);
%\draw[black,fill=black] (-0.9,0.15) circle (0.02cm);
%\node[right] at (-0.9,0.15) {\texttt{~transmitter}};
%\draw[red,fill=red] (-0.9,-0.15) circle (0.05cm);
%\draw[black,fill=black] (-0.9,-0.15) circle (0.02cm);
%\node[right] at (-0.9,-0.15) {\texttt{~receivers}};

% Boundary box
\draw[gray!50!white,dashed] (\BD,\BD) -- (\BD,-\BD) -- (-\BD,-\BD) -- (-\BD,\BD) -- cycle;
\end{tikzpicture}
\begin{tikzpicture}[scale=1.3]

% Antenna drawing
\def\RT{0.72*2};
\def\RR{0.76*2};
\def\Edge{0.15*2};
\def\BD{0.85*2};

\draw[gray,dashed,-] (0,0) -- ({\RT*cos(300)},{\RT*sin(300)});
\draw[gray,dashed,-] (0,0) -- ({\RR*cos(90)},{\RR*sin(90)});

\draw[black,solid,-] ({0.25*cos(300)},{0.25*sin(300)}) arc (300:450:0.25) node[below right,black,xshift=8] {\footnotesize$\SI{150}{\degree}$};

\draw[green,fill=green] ({\RT*cos(300)},{\RT*sin(300)}) circle (0.05cm);
\draw[black,fill=black] ({\RT*cos(300)},{\RT*sin(300)}) circle (0.02cm);

\foreach \beta in {90,95,...,150}
{\draw[red,fill=red] ({\RR*cos(\beta)},{\RR*sin(\beta)}) circle (0.05cm);
\draw[black,fill=black] ({\RR*cos(\beta)},{\RR*sin(\beta)}) circle (0.02cm);}

\draw[gray,solid,-stealth] ({1.4*cos(90)},{1.4*sin(90)}) arc (90:150:1.4);

%% Antennas Legend
%\draw[green,fill=green] (-0.9,0.15) circle (0.05cm);
%\draw[black,fill=black] (-0.9,0.15) circle (0.02cm);
%\node[right] at (-0.9,0.15) {\texttt{~transmitter}};
%\draw[red,fill=red] (-0.9,-0.15) circle (0.05cm);
%\draw[black,fill=black] (-0.9,-0.15) circle (0.02cm);
%\node[right] at (-0.9,-0.15) {\texttt{~receivers}};

% Boundary box
\draw[gray!50!white,dashed] (\BD,\BD) -- (\BD,-\BD) -- (-\BD,-\BD) -- (-\BD,\BD) -- cycle;
\end{tikzpicture}
\caption{\label{SimulationSetting}Antenna arrangements with various bistatic angles $\alpha=\SI{60}{\degree}$, $\SI{120}{\degree}$, and $\SI{150}{\degree}$ corresponding to the transmitter $\mathcal{P}_1$ (left column), $\mathcal{P}_{16}$ (middle column), and $\mathcal{P}_{31}$ (right column).}
\end{center}
\end{figure}

\begin{Example}[Imaging With Zero Constant]\label{Ex1}
Figure \ref{Fig1} shows maps of $\mathfrak{F}_{\dsm}(\mr,m,0)$ with $\alpha=\SI{60}{\degree}$ and various $m=1$, $16$, and $31$. Based on the results, it is possible to recognize the existence, nearly accurate location, and shape of $D_s$ when $m=1$. However, it is very difficult to recognize the existence of $D_1$ and $D_2$ due to the appearance of peaks of large magnitudes when $m=16$. Note that the existence of $D_2$ can be recognized; however, it is very difficult to recognize $D_1$ when $m=31$. Thus, the imaging performance of $\mathfrak{F}_{\dsm}(\mr,m,0)$ is highly dependent on $m$, which supports Discussion \ref{discussion1}.
\end{Example}

\begin{figure}[h]
\includegraphics[width=.33\textwidth]{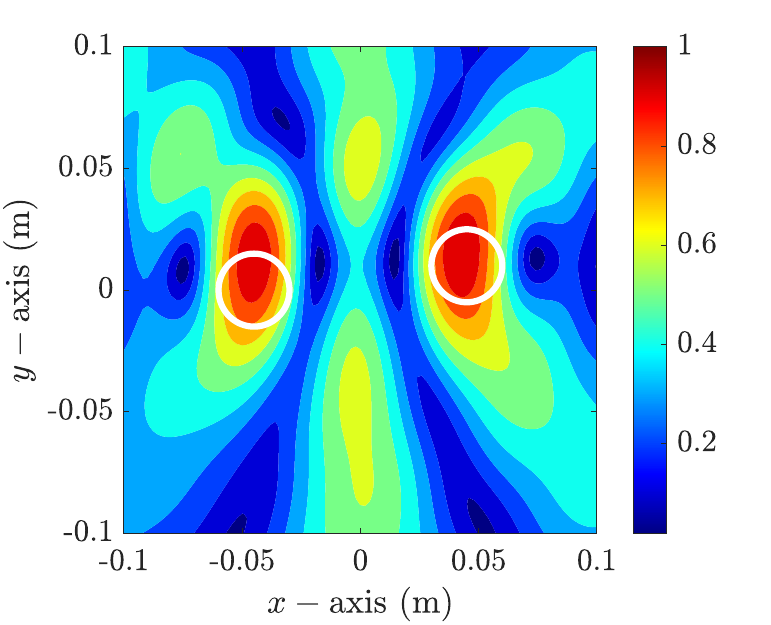}\hfill
%\subfigure[]{\includegraphics[width=.33\textwidth]{}}\hfill
\includegraphics[width=.33\textwidth]{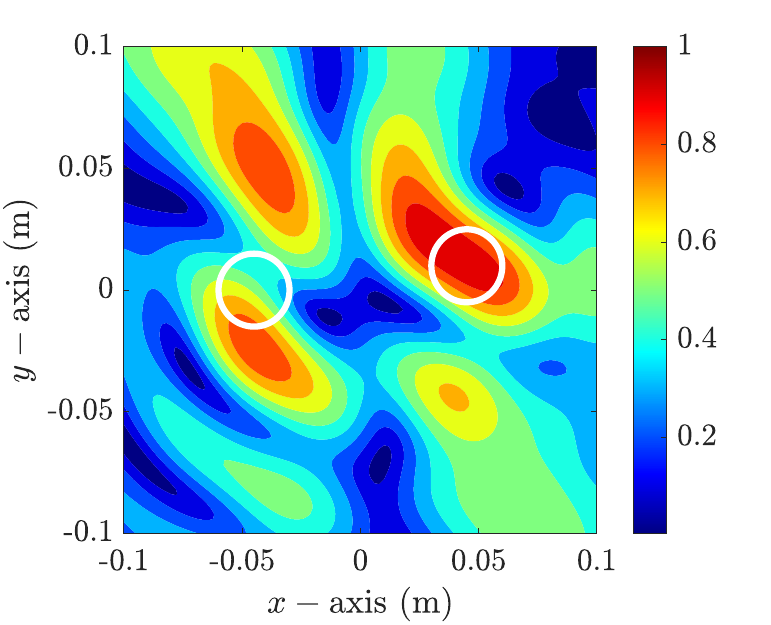}\hfill
\includegraphics[width=.33\textwidth]{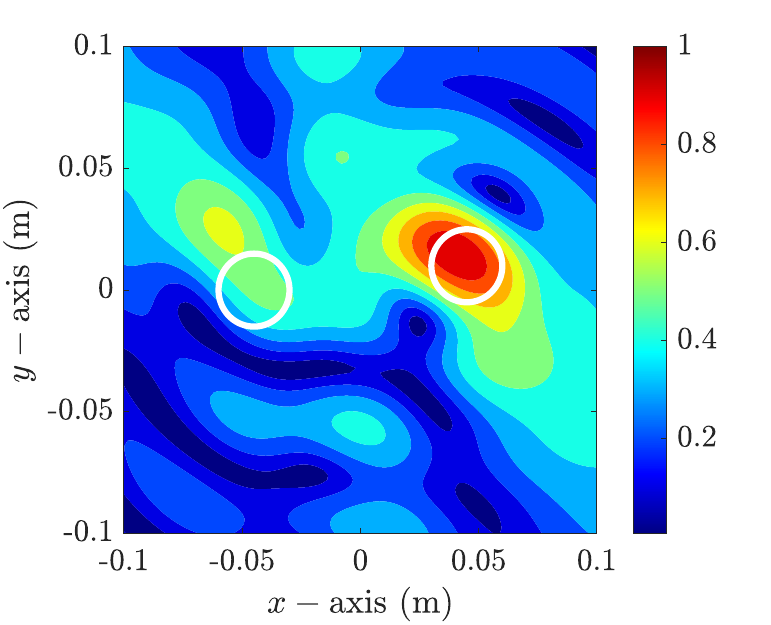}
\caption{\label{Fig1}(Example \ref{Ex1}) Maps of $\mathfrak{F}_{\dsm}(\mr,m,0)$ with $\alpha=\SI{60}{\degree}$ for $m=1$ (left), $m=16$ (middle), and $m=31$ (right).}
\end{figure}

\begin{Example}[Imaging With Nonzero Constant]\label{Ex2}
Figure \ref{Fig2-1} shows maps of $\mathfrak{F}_{\dsm}(\mr,m,0.1)$ with $\alpha=\SI{60}{\degree}$ and various $m$. Similar to Example \ref{Ex1}, here, the existence of $D_1$ and $D_2$ can be recognized; however, more artifacts are included when $m=1$. In addition, poor imaging results were obtained for $m=16$ and $m=31$.

Figure \ref{Fig2-2} shows maps of $\mathfrak{F}_{\dsm}(\mr,m,0.5i)$ with $\alpha=\SI{60}{\degree}$ and various $m$. Compared with the results shown in Figure \ref{Fig2-1}, in this case, it is impossible to recognize both $D_1$ and $D_2$ due to the appearance of several artifacts with large magnitudes. Based on the imaging results $\mathfrak{F}_{\dsm}(\mr,m,2)$ obtained with $\alpha=\SI{60}{\degree}$ and various $m$, it appears to be impossible to recognize $D_1$ and $D_2$ (Figure \ref{Fig2-3}). Thus, based on Discussion \ref{discussion3}, converting unmeasurable scattered field data into the zero constant (i.e., selecting $C=0$) is the best choice.
\end{Example}

\begin{figure}[h]
\includegraphics[width=.33\textwidth]{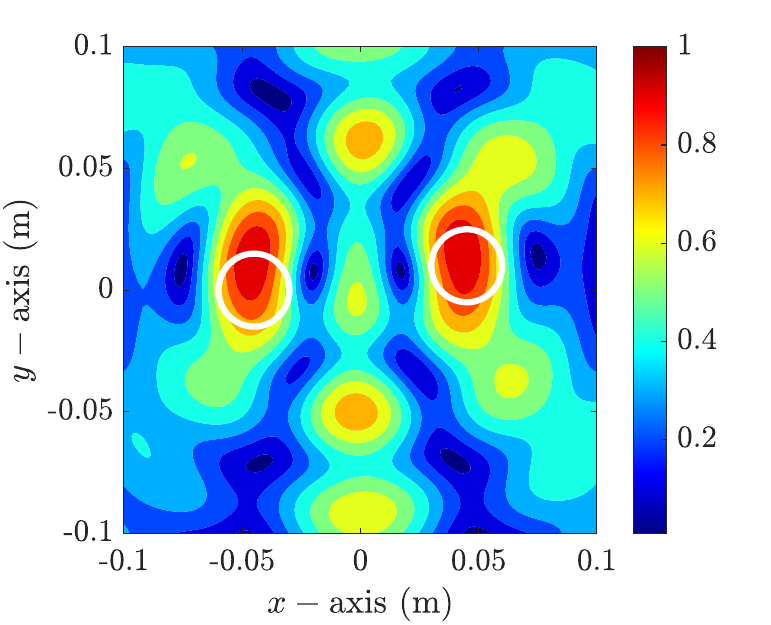}\hfill
%\subfigure[]{\includegraphics[width=.33\textwidth]{}}\hfill
\includegraphics[width=.33\textwidth]{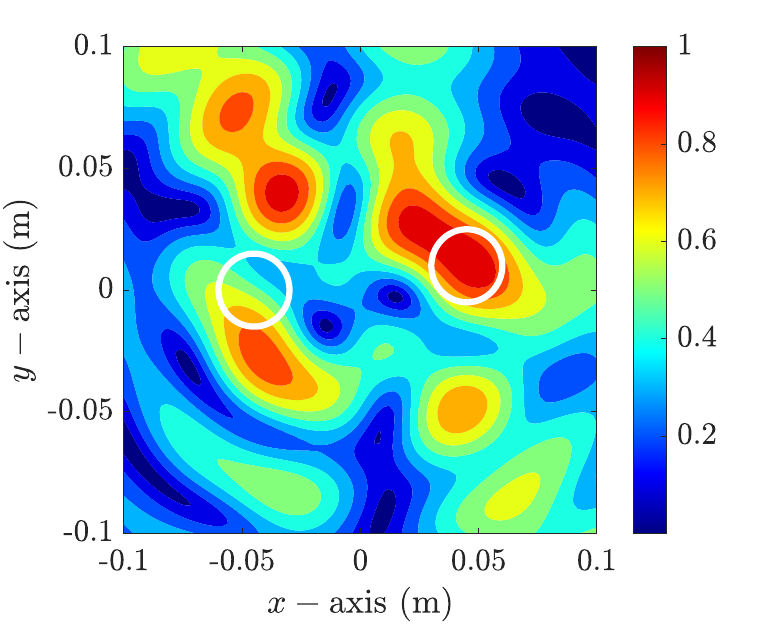}\hfill
\includegraphics[width=.33\textwidth]{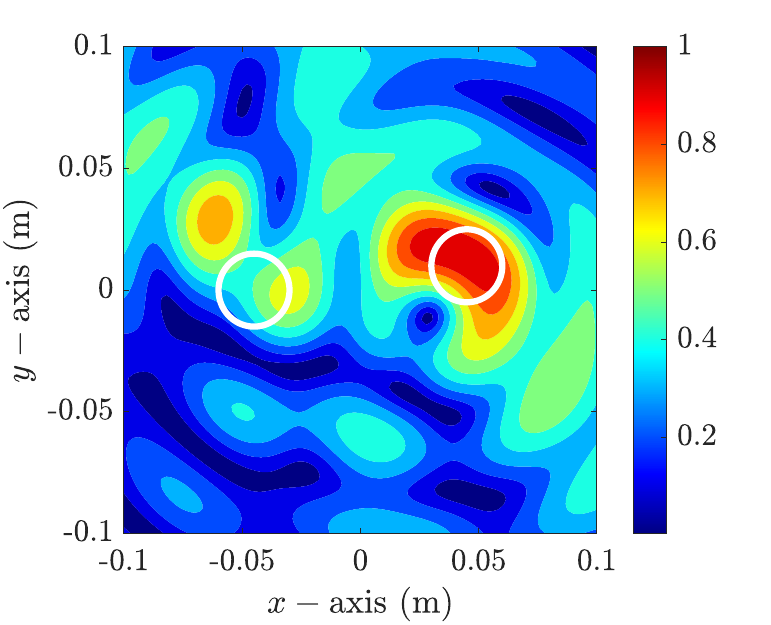}
\caption{\label{Fig2-1}(Example \ref{Ex2}) Maps of $\mathfrak{F}_{\dsm}(\mr,m,0.1)$ with $\alpha=\SI{60}{\degree}$ for $m=1$ (left), $m=16$ (middle), and $m=31$ (right).}
\end{figure}

\begin{figure}[h]
\includegraphics[width=.33\textwidth]{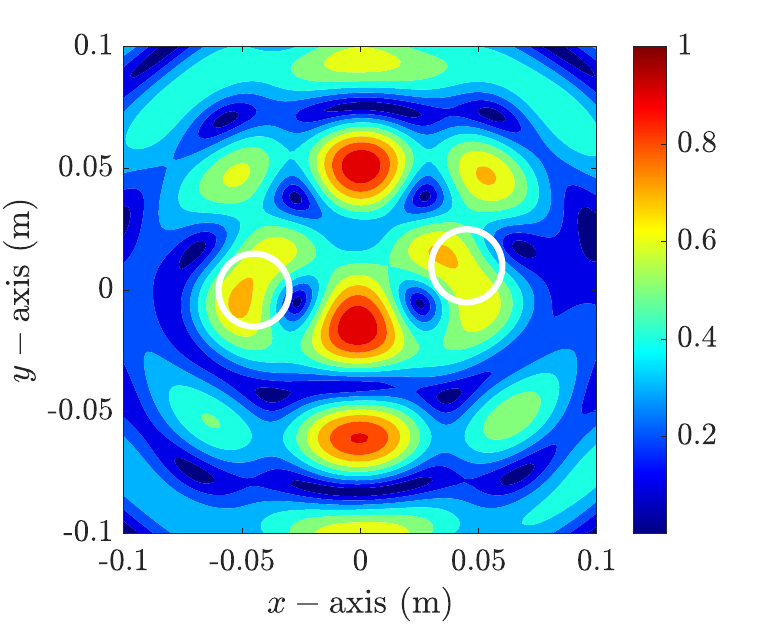}\hfill
%\subfigure[]{\includegraphics[width=.33\textwidth]{}}\hfill
\includegraphics[width=.33\textwidth]{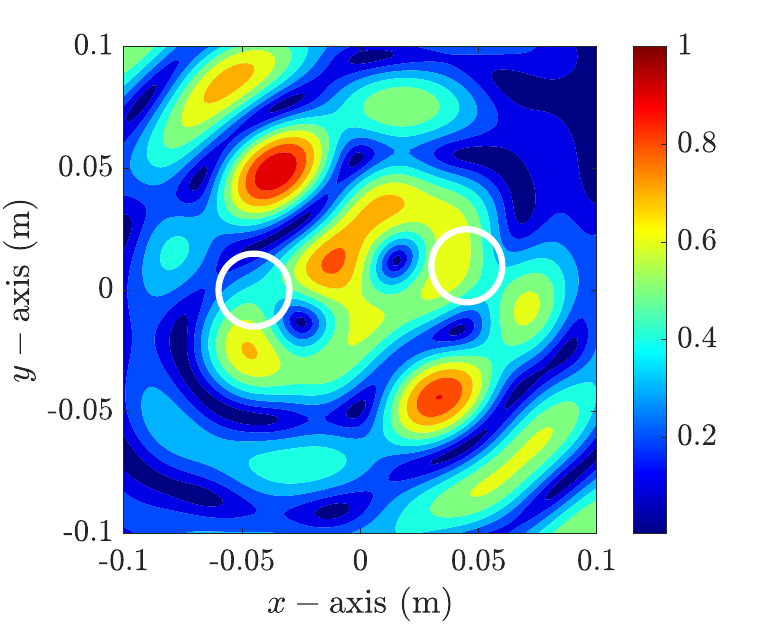}\hfill
\includegraphics[width=.33\textwidth]{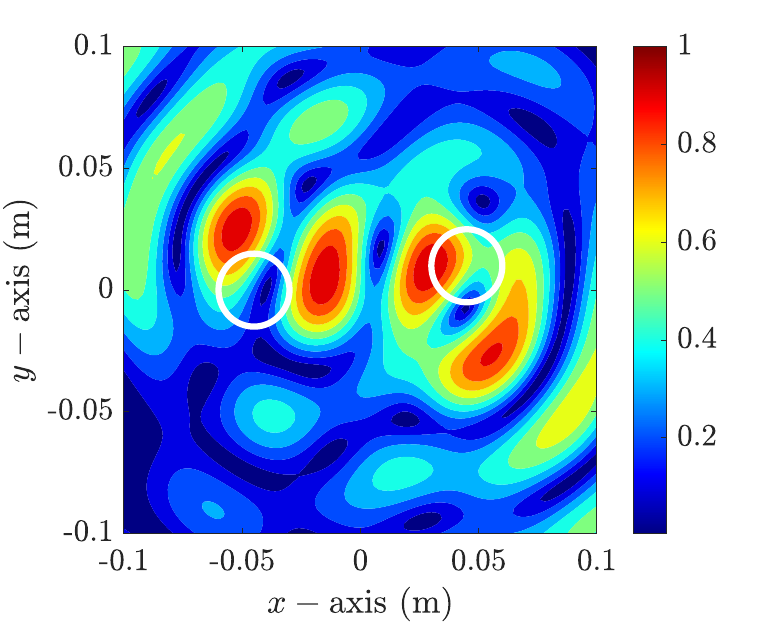}
\caption{\label{Fig2-2}(Example \ref{Ex2}) Maps of $\mathfrak{F}_{\dsm}(\mr,m,0.5i)$ with $\alpha=\SI{60}{\degree}$ for $m=1$ (left), $m=16$ (middle), and $m=31$ (right).}
\end{figure}

\begin{figure}[h]
\includegraphics[width=.33\textwidth]{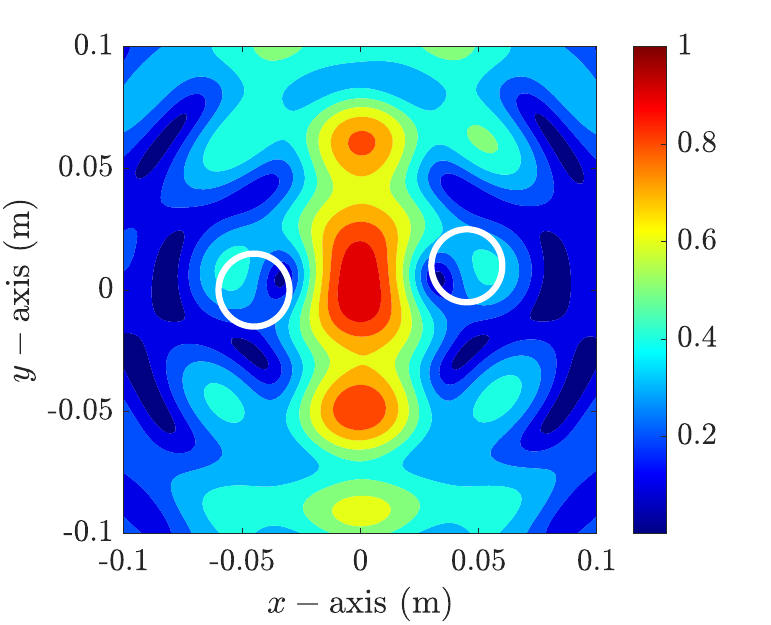}\hfill
%\subfigure[]{\includegraphics[width=.33\textwidth]{}}\hfill
\includegraphics[width=.33\textwidth]{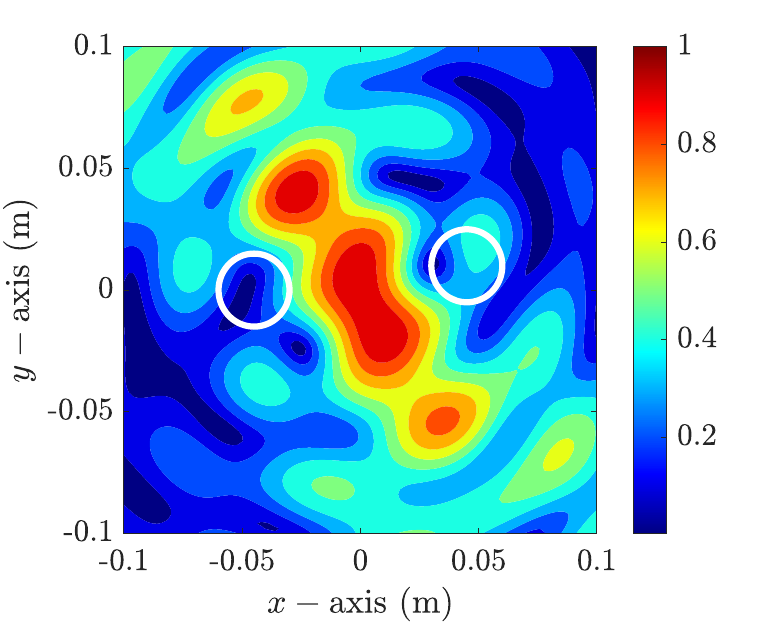}\hfill
\includegraphics[width=.33\textwidth]{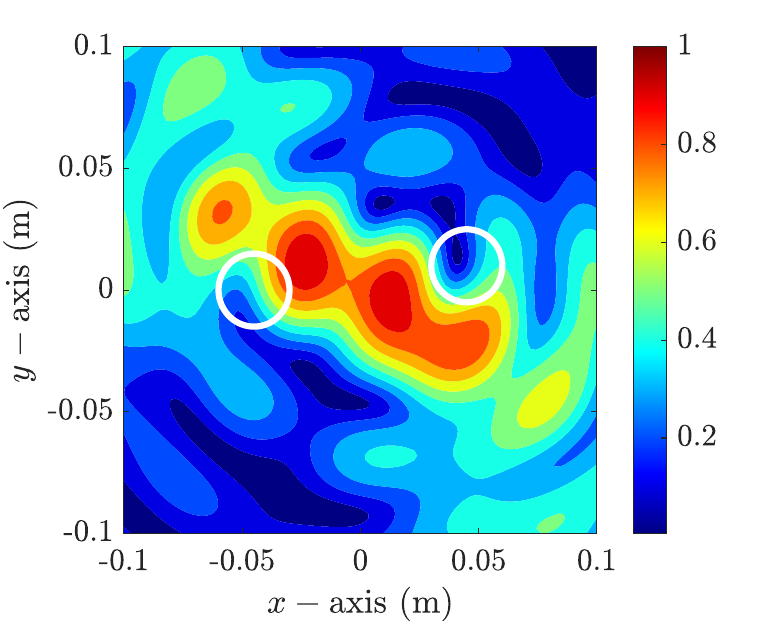}
\caption{\label{Fig2-3}(Example \ref{Ex2}) Maps of $\mathfrak{F}_{\dsm}(\mr,m,2)$ with $\alpha=\SI{60}{\degree}$ for $m=1$ (left), $m=16$ (middle), and $m=31$ (right).}
\end{figure}

\begin{Example}[Imaging With Various Bistatic Angles]\label{Ex3}
Figure \ref{Fig3-1} shows maps of $\mathfrak{F}_{\dsm}(\mr,m,0)$ with $\alpha=\SI{120}{\degree}$ and various $m$. In contrast to the result obtained for Example \ref{Ex1}, here, it is very difficult to identify the outline shape of the objects. In addition, if $C=0.2$ is selected, a peak of large magnitude appears at the origin; thus, it is very difficult to distinguish the objects and artifact when $m=1$ (Figure \ref{Fig3-2}). Furthermore, it is impossible to recognize the objects when $m=16$ and $31$.

Figure \ref{Fig3-3} shows maps of $\mathfrak{F}_{\dsm}(\mr,m,0)$ with $\alpha=\SI{150}{\degree}$ and various $m$. In contrast to the previous results, it is impossible to recognize the existence of the objects because peaks of large magnitudes do not appear at the objects. In addition, if $\alpha=\SI{180}{\degree}$, as observed in Discussion \ref{discussion2}, nothing can be recognized because there is no peak of large magnitude as shown in Figure \ref{Fig3-4}.
\end{Example}

\begin{figure}[h]
\includegraphics[width=.33\textwidth]{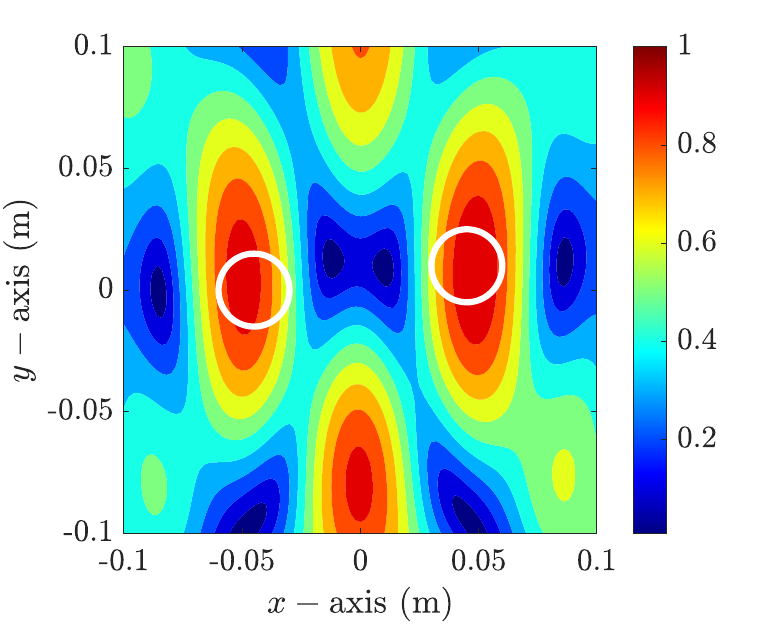}\hfill
%\subfigure[]{\includegraphics[width=.33\textwidth]{}}\hfill
\includegraphics[width=.33\textwidth]{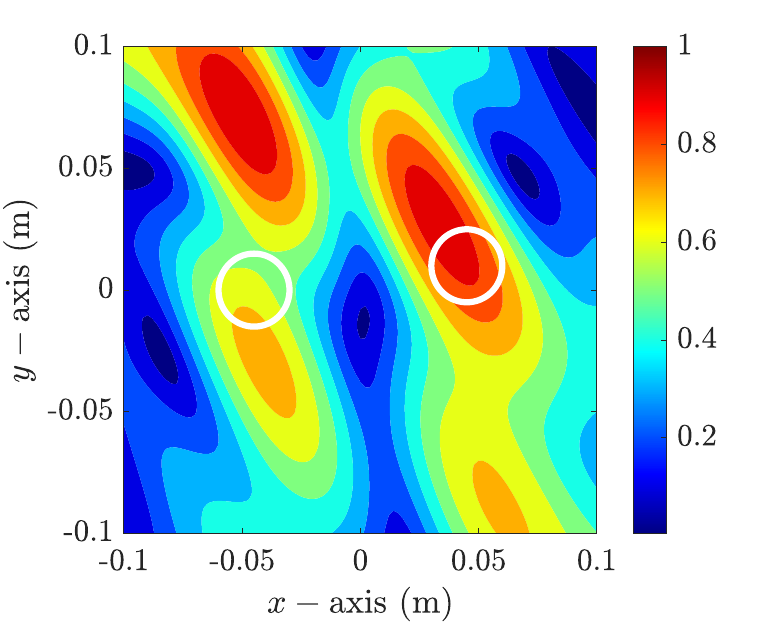}\hfill
\includegraphics[width=.33\textwidth]{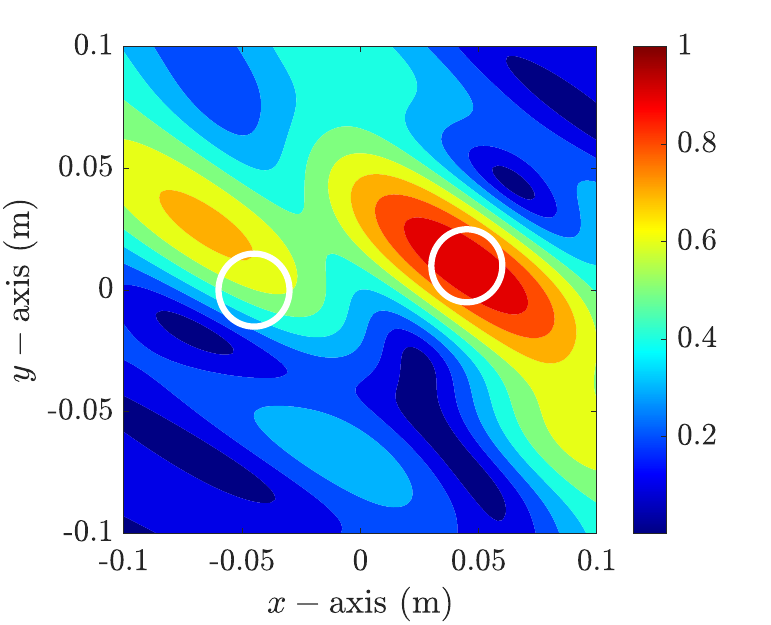}
\caption{\label{Fig3-1}(Example \ref{Ex3}) Maps of $\mathfrak{F}_{\dsm}(\mr,m,0)$ with $\alpha=\SI{120}{\degree}$ for $m=1$ (left), $m=16$ (middle), and $m=31$ (right).}
\end{figure}

\begin{figure}[h]
\includegraphics[width=.33\textwidth]{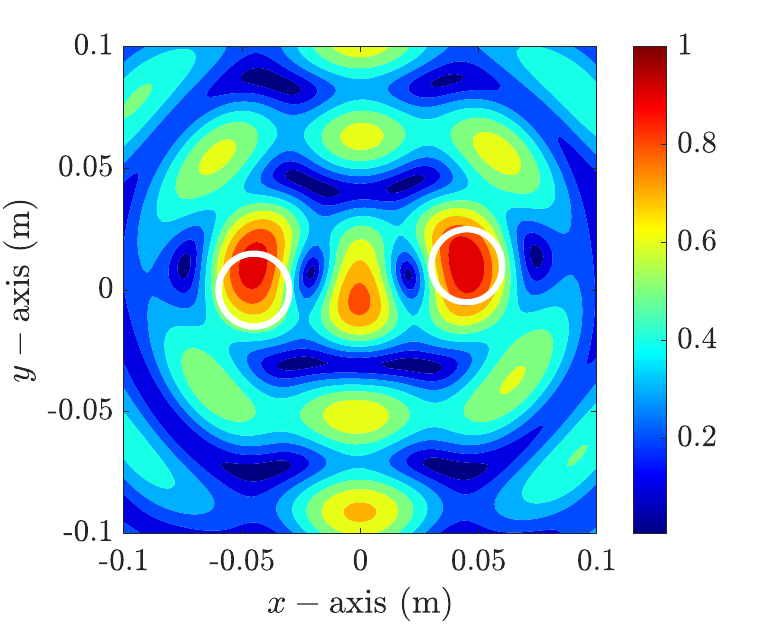}\hfill
%\subfigure[]{\includegraphics[width=.33\textwidth]{}}\hfill
\includegraphics[width=.33\textwidth]{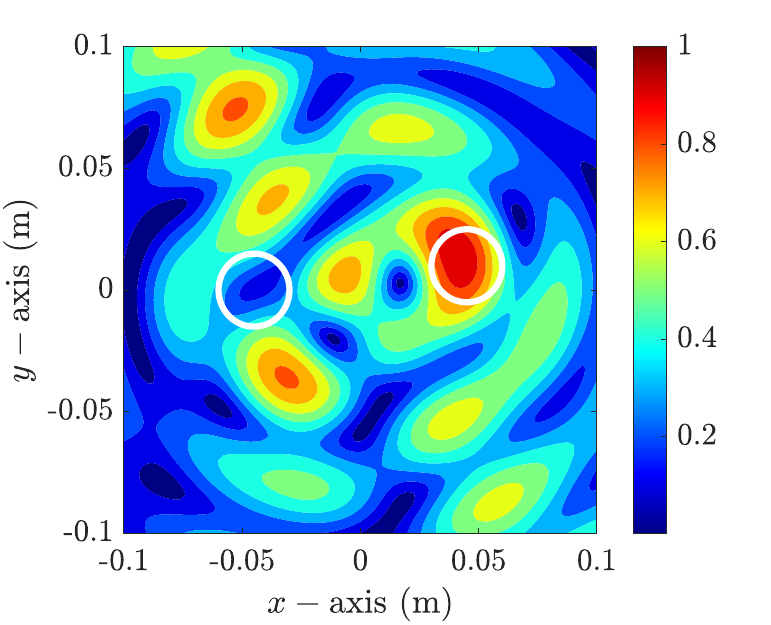}\hfill
\includegraphics[width=.33\textwidth]{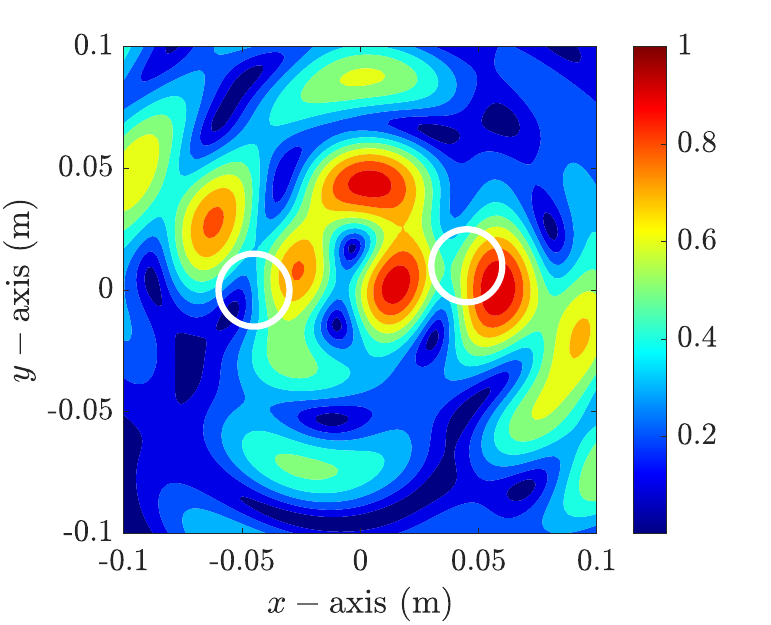}
\caption{\label{Fig3-2}(Example \ref{Ex3}) Maps of $\mathfrak{F}_{\dsm}(\mr,m,0.2)$ with $\alpha=\SI{120}{\degree}$ for $m=1$ (left), $m=16$ (middle), and $m=31$ (right).}
\end{figure}

\begin{figure}[h]
\includegraphics[width=.33\textwidth]{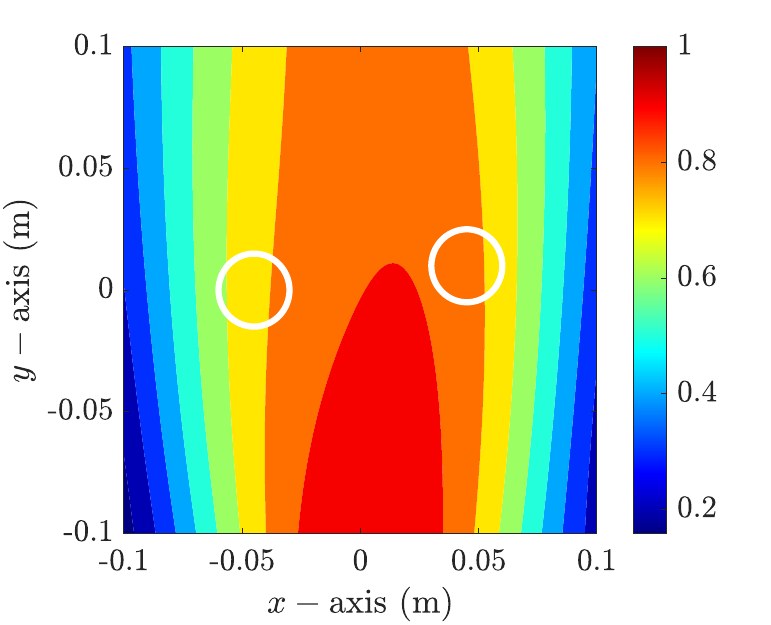}\hfill
%\subfigure[]{\includegraphics[width=.33\textwidth]{}}\hfill
\includegraphics[width=.33\textwidth]{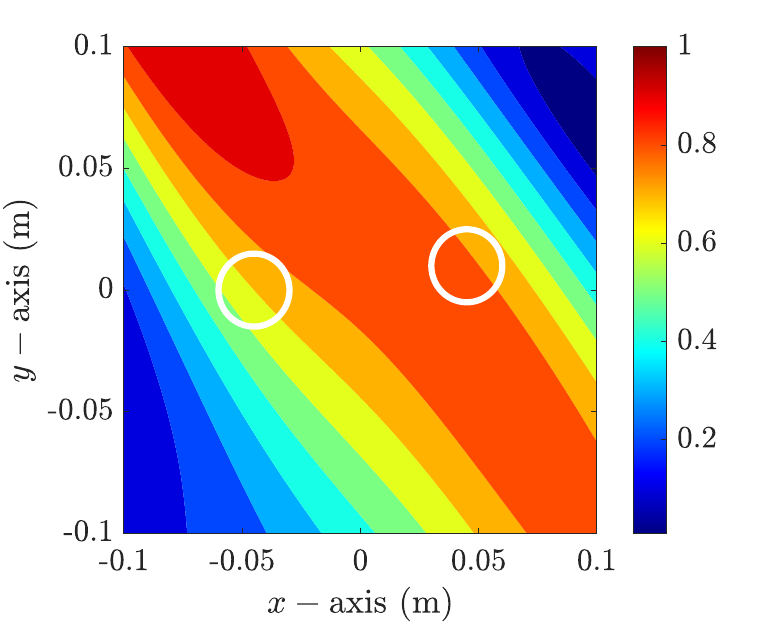}\hfill
\includegraphics[width=.33\textwidth]{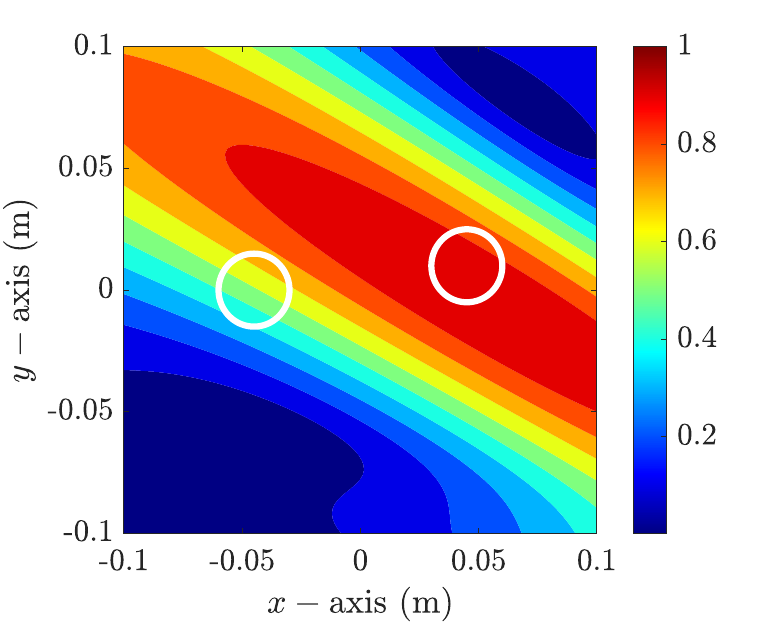}
\caption{\label{Fig3-3}(Example \ref{Ex3}) Maps of $\mathfrak{F}_{\dsm}(\mr,m,0)$ with $\alpha=\SI{150}{\degree}$ for $m=1$ (left), $m=16$ (middle), and $m=31$ (right).}
\end{figure}

\begin{figure}[h]
\includegraphics[width=.33\textwidth]{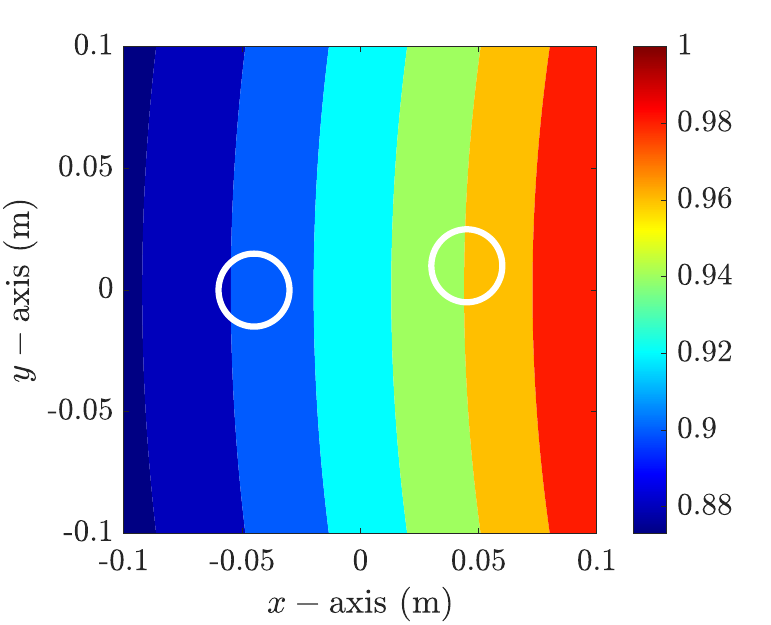}\hfill
%\subfigure[]{\includegraphics[width=.33\textwidth]{}}\hfill
\includegraphics[width=.33\textwidth]{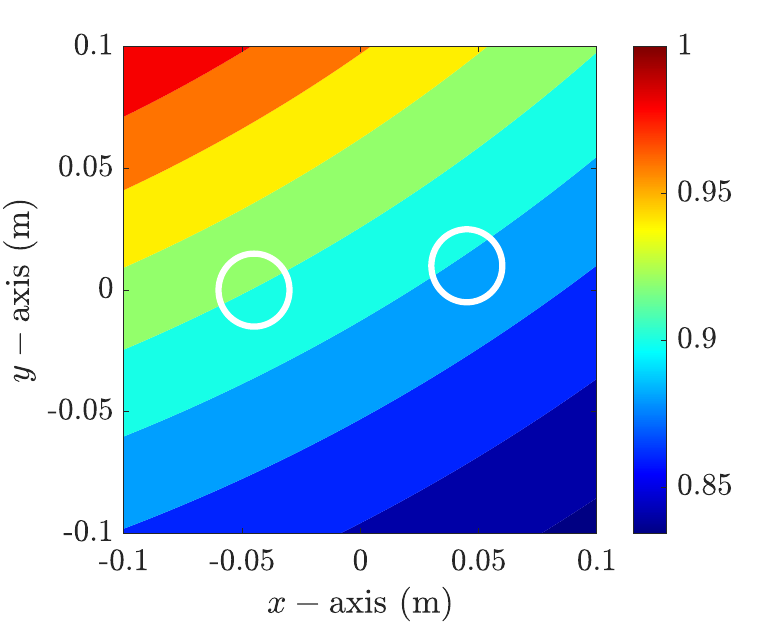}\hfill
\includegraphics[width=.33\textwidth]{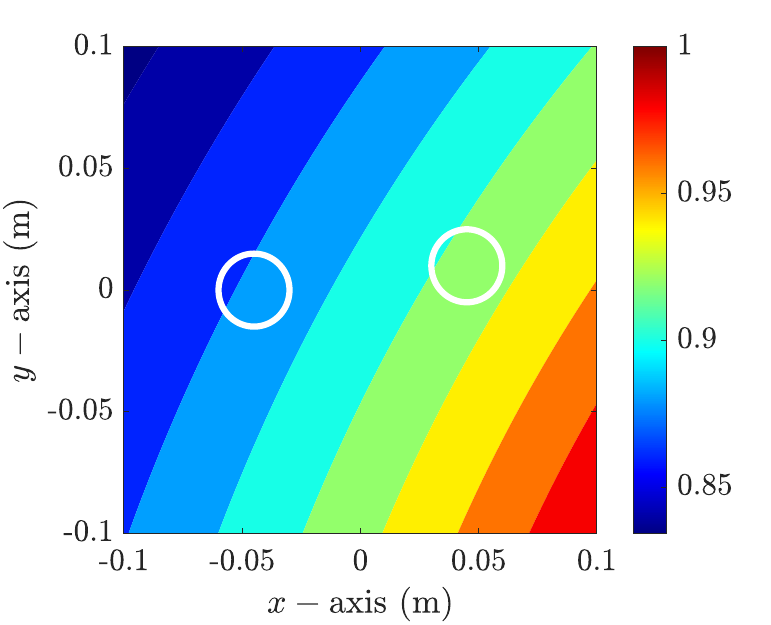}
\caption{\label{Fig3-4}(Example \ref{Ex3}) Maps of $\mathfrak{F}_{\dsm}(\mr,m,0)$ with $\alpha=\SI{180}{\degree}$ for $m=1$ (left), $m=16$ (middle), and $m=31$ (right).}
\end{figure}

\begin{Example}[Imaging With Multiple Sources]\label{Ex4}
Figure \ref{Fig4} shows maps of $\mathfrak{F}_{\msm}(\mr,0)$ with various bistatic angle $\alpha$ values. Here, opposite to Example \ref{Ex3}, it is possible to recognize the existence and approximate shape of both $D_1$ and $D_2$ for $\alpha=\SI{120}{\degree}$. However, it remains impossible to retrieve the objects when $\alpha=\SI{150}{\degree}$ and $\SI{180}{\degree}$. Thus, we conclude that the DSM with multiple sources improves the imaging performance, but it is impossible to retrieve the objects when $\alpha$ approaches $\SI{180}{\degree}$.
\end{Example}

\begin{figure}[h]
%\subfigure[]{\includegraphics[width=.33\textwidth]{}}\hfill
\includegraphics[width=.33\textwidth]{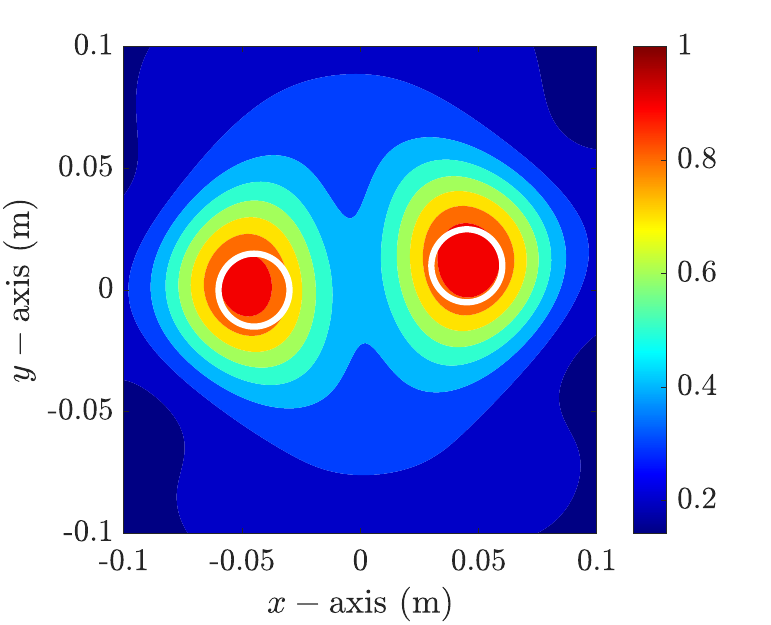}\hfill
\includegraphics[width=.33\textwidth]{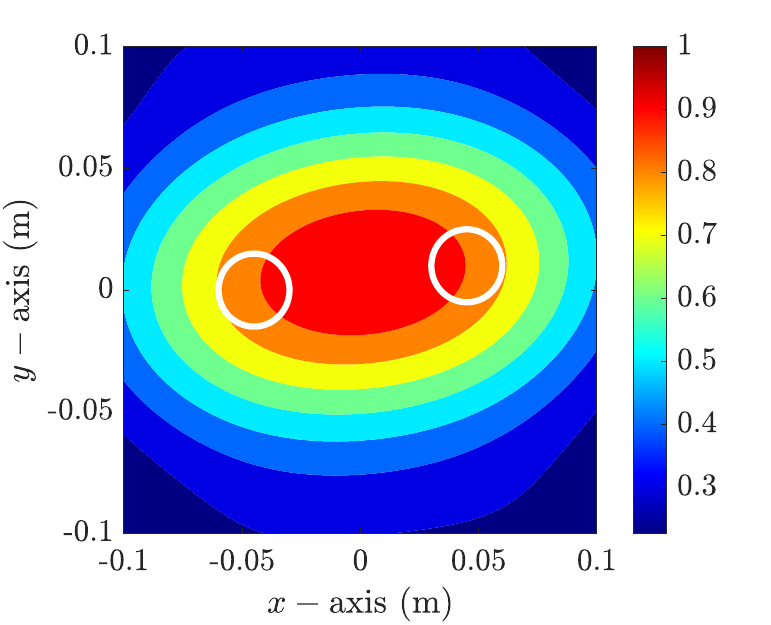}\hfill
\includegraphics[width=.33\textwidth]{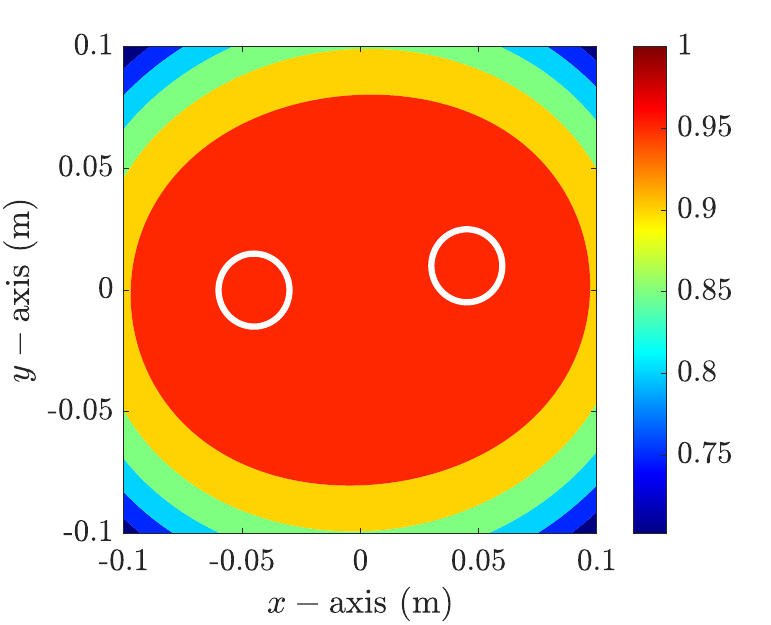}
\caption{\label{Fig4}(Example \ref{Ex4}) Maps of $\mathfrak{F}_{\msm}(\mr,0)$ with $\alpha=\SI{120}{\degree}$ (left), $\alpha=\SI{150}{\degree}$ (middle), and $\alpha=\SI{180}{\degree}$ (right).}
\end{figure}

\begin{Example}[Imaging With Multiple Sources with Nonzero Constant]\label{Ex5}
For the final example, we consider the imaging results with various constant $C$ with bistatic angle $\alpha=\SI{120}{\degree}$. Based on the Figure \ref{Fig5}, it is possible to recognize the existence and location of the objects but their outline shapes cannot be identified when $C=0.1$. Notice that since the values of $\mathfrak{F}_{\msm}(\mr,0.3i)$ are significantly large at the origin and its neighborhood, it seems very difficult to recognize the presence of objects. When a larger $C$ value ($C=0.5$) is applied, since the maximum value of $\mathfrak{F}_{\msm}(\mr,0.5)$ appears only at the origin, it is impossible to recognize the presence of the objects.
\end{Example}

\begin{figure}[h]
\includegraphics[width=.33\textwidth]{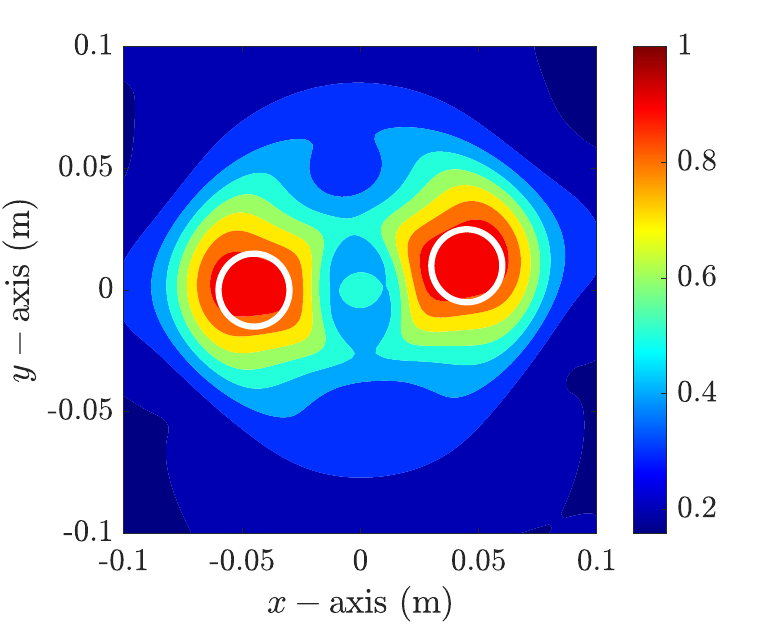}\hfill
\includegraphics[width=.33\textwidth]{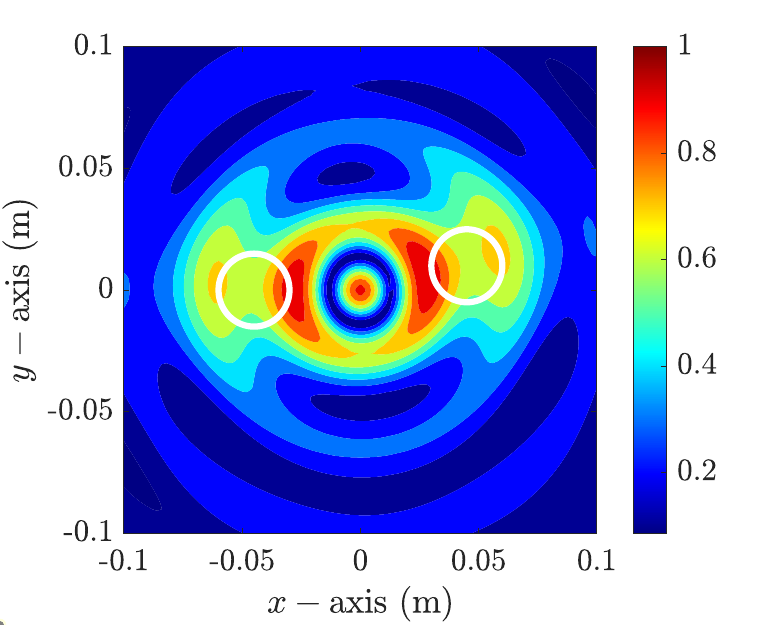}\hfill
\includegraphics[width=.33\textwidth]{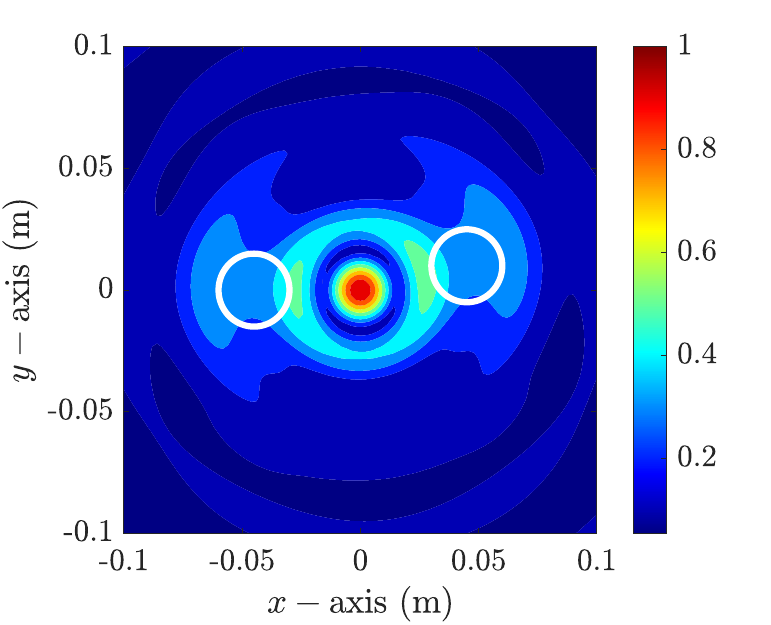}
\caption{\label{Fig5}(Example \ref{Ex5}) Maps of $\mathfrak{F}_{\msm}(\mr,C)$ with $\alpha=\SI{120}{\degree}$ and $C=0.1$ (left), $C=0.3i$ (middle), and $C=0.5$ (right).}
\end{figure}

\section{Concluding Remark}\label{sec:6}
In this paper, we have considered the application of the DSM to realize fast identification of small dielectric objects from a limited-aperture bistatic measurement dataset. To this end, indicator functions with single and multiple sources were designed by converting the unknown measurement data into a fixed constant. In addition, to explain the applicability of the indicator functions, as well as the influence of the converted constant and bistatic angle, we demonstrated that the indicator functions can be expressed by an infinite series of the Bessel functions of integer order, the material properties, and the applied constant and bistatic angle. Based on the theoretical results, we conclude that converting unknown measurement data into a zero constant and a small bistatic angle ensures good results. The results of various numerical simulations conducted on a 2D Fresnel dataset were presented to support these theoretical results.

\end{document}